\crefname{hypothesis}{Hypothesis}{Hypotheses}
\Crefname{ALC@unique}{Line}{Lines}
\DeclareMathOperator{\rank}{rank}
\colorlet{texcscolor}{blue!50!black}
\colorlet{texemcolor}{red!70!black}
\colorlet{texpreamble}{red!70!black}
\colorlet{codebackground}{black!25!white!25}
\newcommand{\R}{\mathbb{R}}
\newcommand{\C}{\mathbb{C}}
\newcommand{\E}{\mathbb{E}}
\newcommand{\Prob}{\mathbb{P}}
\newcommand{\lowrank}[2]{\llbracket {#1} \rrbracket_{#2}}
\newcommand{\norm}[1]{\left\lVert#1\right\rVert}
\DeclareMathOperator{\sign}{sgn}
\newcommand{\ignore}[1]{}
\lstdefinestyle{siamlatex}{%
  style=tcblatex,
  texcsstyle=*\color{texcscolor},
  texcsstyle=[2]\color{texemcolor},
  keywordstyle=[2]\color{texemcolor},
  moretexcs={cref,Cref,maketitle,mathcal,text,headers,email,url},
}
\DeclareTotalTCBox{\code}{ v O{} }
{ 
  fontupper=\ttfamily\color{black},
  nobeforeafter,
  tcbox raise base,
  colback=codebackground,colframe=white,
  top=0pt,bottom=0pt,left=0mm,right=0mm,
  leftrule=0pt,rightrule=0pt,toprule=0mm,bottomrule=0mm,
  boxsep=0.5mm,
  #2}{#1}
\patchcmd\newpage{\vfil}{}{}{}
\title{Randomized low-rank approximation for symmetric indefinite matrices\thanks{Date: \today \funding{TP was supported by the Heilbronn Institute for Mathematical Research.}}}
\author{Yuji Nakatsukasa\thanks{Mathematical Institute, University of Oxford, Oxford, OX2 6GG, UK, (\email{nakatsukasa@maths.ox.ac.uk}, \email{park@maths.ox.ac.uk}).}
\and Taejun Park\footnotemark[2] }
\begin{document}
\maketitle

\begin{tcbverbatimwrite}{tmp_\jobname_abstract.tex}
\begin{abstract}
The Nystr\"om method is a popular choice for finding a low-rank approximation to a symmetric positive semi-definite matrix. The method can fail when applied to symmetric indefinite matrices, for which the error can be unboundedly large. 
In this work, we first identify the main challenges in finding a Nystr\"om approximation to symmetric indefinite matrices. 
We then prove the existence of a variant that overcomes the instability, and establish relative-error nuclear norm bounds of the resulting approximation that hold when the singular values decay rapidly. The analysis naturally leads to a practical algorithm, whose robustness is illustrated with experiments.
\end{abstract}

\begin{keywords}
  Symmetric matrices, Nystr\"om method, Low-rank approximation, Randomized linear algebra
\end{keywords}

\begin{AMS}
  15A23, 65F55
\end{AMS}
\end{tcbverbatimwrite}
\input{tmp_\jobname_abstract.tex}

\section{Introduction}
\label{sec:intro}
Low-rank structures are ubiquitous in the computational sciences. They appear frequently as matrices having low numerical rank \cite{lowrank_udelltownsend}.  A low-rank approximation to a matrix provides an efficient way to store and process the matrix when the dimension is large. The Nystr\"om method \cite{GM,nystrom,SeegarWilliams} has been a popular choice for finding low-rank approximations to symmetric positive semi-definite (SPSD) matrices, especially in the machine learning community for kernel-based methods. 

Let $A\in \mathbb{R}^{n\times n}$ be a SPSD matrix and let the positive integer $r$ be the target rank. Then the Nystr\"om method is given by $A_{nys}^{(s)} = CW^\dagger C^T$ where $C:=AX\in \R^{n\times s}$ and $W:=X^T\!AX\in \R^{s\times s}$  with $r\leq s<n$ and $X\in \R^{n\times s}$ is a sketching matrix. The positive integer $s$ is called the sketch size, and typically $r < s\ll n$. Traditionally, $X$ is chosen to be a column sampling matrix, which has exactly one non-zero entry equal to $1$ in each column \cite{GM,SeegarWilliams}. In this case, $C$ is a subset of $s$ columns of $A$ and $W$ is an $s\times s$ principal submatrix of $A$. There are different sampling schemes for column sampling, including uniform sampling, leverage score sampling \cite{GM,curleverage,SeegarWilliams,woodruff,muscomusco2017} and k-means++ sampling \cite{oglicGartner2017}. In recent years, other choices for $X$ have been shown to be practical, including Gaussian matrices, subsampled randomized trigonometric transforms (SRTTs) and sparse maps \cite{hmt,martinsson_tropp_2020}. These are \emph{random embeddings}, which are the focus of this paper, and unlike column sampling, they mix up the coordinates of a vector when applied \cite{martinsson_tropp_2020}. 

In this paper, we investigate the effect of using $A_{nys}^{(s)}$ and its rank-restricted variants for symmetric matrices that are possibly \emph{indefinite}. Low-rank approximation of symmetric indefinite matrices arises in many applications, such as learning in reproducing kernel Kre{\u i}n spaces \cite{indef1}, natural language processing \cite{devlinetal2019,PiccoliRossi2014} and non-metric proximity transformations \cite{indef2}, which has applications in bioinformatics and social networks. The original matrix $A$ does not have to be SPSD for one to form the Nystr\"om approximation $A_{nys}^{(s)}$. However, the theory does not translate directly to symmetric indefinite matrices because it uses the fact that the original matrix is SPSD \cite{specerr,GM,WGM}. Indeed, the Nystr\"om approximation can be very poor for indefinite $A$, as we illustrate below. In this work, we show that a judiciously constructed rank-restricted variant of the Nystr\"om approximation, when used with random embeddings, is robust even for symmetric indefinite matrices, which often outperforms other existing methods as we show for synthetic datasets (Figure \ref{indefkerfig}) and real datasets (Figure \ref{fig:UCI}) in Section \ref{sec:numexp}. We also show in Section \ref{sec:thm} that there exists a projection for the core matrix $W$ such that the Nystr\"om approximation gives a good low-rank approximation to \emph{any} symmetric matrix when the singular values decay sufficiently fast.


\subsection{Nystr\"om methods and related work}
There are several variants of the Nystr\"om method for SPSD matrices. There are two rank-restricted versions that give a rank-$r$ approximation to $A_{nys}^{(s)}$ where $r<s$. The first version, which is more traditional, is defined by $A_{nys}^{(s,r)} = C\lowrank{W}{r}^\dagger C^T$ \cite{drineasmahoney05,GM,LiNysRSVD} where $\lowrank{W}{r}$ denotes the best rank-$r$ approximation to the matrix $W$ using the truncated SVD. The second version is given by $\lowrank{A_{nys}^{(s)}}{r} = \lowrank{CW^\dagger C^T}{r}$ \cite{P-AB,TroppRR2,WGM}, which was suggested more recently. The difference between the two methods is that $A_{nys}^{(s,r)}$ performs rank-truncation in the core matrix, $W$, which makes this method cheaper to compute, while $\lowrank{A_{nys}^{(s)}}{r}$ performs rank-truncation in the Nystr\"om approximation $A_{nys}^{(s)}$, which makes this method take advantage of the full Nystr\"om approximation, $C$ and $W$, when performing the rank-truncation. There are also other variants of the Nystr\"om method, including one for rectangular matrices \cite{GN,TroppGN} and one that guarantees numerical stability \cite{GN}. This paper will mostly focus on $A_{nys}^{(s,r)}$.

It is known that for SPSD matrices, $A_{nys}^{(s)}$ \cite{GM} and $\lowrank{A_{nys}^{(s)}}{r}$ \cite{WGM} satisfy relative-error bounds in the nuclear norm. This means that if $\hat{A}$ is a low-rank approximation to $A$ (in this case, $A_{nys}^{(s)}$ or $\lowrank{A_{nys}^{(s)}}{r}$) and $\epsilon>0$ then
\begin{equation} \label{relbound}
    \norm{A-\hat{A}}_* \leq (1+\epsilon)\norm{A-\lowrank{A}{r}}_*
\end{equation}
holds with high probability under some conditions on the sketch $X$ and the sketch size $s >r$ where $\norm{\cdot}_*$ is the nuclear norm (the sum of the singular values). The details are in the relevant papers \cite{GM,WGM}. On the other hand, it is not known whether $A_{nys}^{(s,r)}$ satisfies a relative-error norm bound mentioned above \cite{WGM}. In \cite{P-AB}, an example of a $3\!\times\!3$ SPSD matrix is given, showing the downside of using $A_{nys}^{(s,r)}$ for kernel approximations which commonly uses a column sampling matrix. The authors propose $\lowrank{A_{nys}^{(s)}}{r}$\footnote{As in \cite{P-AB}, for SPSD matrices, it should be noted that $\norm{A-\lowrank{A_{nys}^{(s)}}{r}}\leq \norm{A-A_{nys}^{(s,r)}}$ will hold in the spectral norm and the Frobenius norm.} as an alternative, for which later Wang, Gittens and Mahoney derived a
 relative-error norm bound \cite{WGM}. For this example, the problem persists even if we use random embeddings. However, this is a small example that can yield results with high variability, and random embeddings do give a smaller expected relative-error in the nuclear norm and a smaller variance result than column sampling, especially when the dimension of the matrix is large. This hints that random embeddings can be more robust and reliable than column sampling. This type of phenomena have been discussed before, for example in \cite{martinsson_tropp_2020} where the authors point out that column sampling is less reliable than random embeddings due to their relatively high variance results.

For symmetric indefinite matrices, which are the focus of this paper, not much has been shown. It is however known that the problem is rather difficult. We can easily see that the plain Nystr\"om approximation, $A_{nys}^{(s)}$ can behave poorly for symmetric indefinite matrices. We can easily see that the plain Nystr\"om approximation, $A_{nys}^{(s)}$ can be very bad for symmetric indefinite matrices. For example, let $0<\epsilon<1$ and
\begin{equation}\label{eq:badexample}
    A = \begin{bmatrix}
        0 & 1 \\ 1 & 0
    \end{bmatrix}, X = \begin{bmatrix} \epsilon \\ \sqrt{1-\epsilon^2}
    \end{bmatrix}
\end{equation} where $A$ has eigenvalues $\pm 1$. Then the plain rank-$1$ Nystr\"om approximation to $A$ is
\begin{equation}
A_{nys}^{(1)} = AX(X^TAX)^\dagger X^TA = \frac{1}{2\epsilon \sqrt{1-\epsilon^2}}\begin{bmatrix}
1-\epsilon^2 & \epsilon \sqrt{1-\epsilon^2} \\ \epsilon \sqrt{1-\epsilon^2} & \epsilon^2
\end{bmatrix}
\end{equation} and therefore
\begin{equation}
    \norm{A-A_{nys}^{(1)}}_* = \frac{1}{2\epsilon \sqrt{1-\epsilon^2}},
\end{equation} which can be arbitrarily large as $\epsilon\rightarrow 0$, whereas the best rank-$1$ nuclear norm error of $A$ is $1$. This type of issue has also been observed in a different context for a CUR approximation of rectangular matrices \cite{cortinovis2020low}. Essentially, the issue arises from the presence of an eigenvalue of $X^TAX$ close to (or even equal to) 0, much smaller than $\sigma_{r}(A)$ or even $\sigma_{\min}(A)$---a phenomenon that is absent when $A$ is SPSD. This blows up the norm of the core matrix $(X^TAX)^\dagger$, causing instability. While this is admittedly a contrived example, the difficulty can be easily observed also in experiments. In Figure \ref{nys_fail}, the two plots were generated using $100\times 100$ symmetric indefinite matrices with Haar distributed eigenvectors. In the left plot, the eigenvalues decay geometrically from $1$ to $10^{-8}$ with random signs, and in the right plot, the first $20$ eigenvalues are equal to $\pm 1$ and the other $80$ eigenvalues are equal to $\pm 10^{-10}$ where the signs were applied randomly with equal probability. We apply the plain Nystr\"om approximation $A_{nys}^{(r)}$ using the Gaussian sketch to $A$. We can see that the plain Nystr\"om approximation can be unstable.
\begin{figure}[!ht]
\centering
\includegraphics[scale = 0.6]{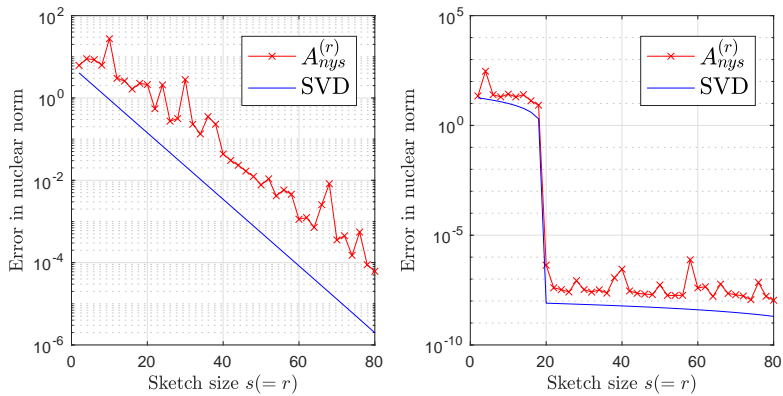}
\centering
\caption{Plain Nystr\"om approximation $A_{nys}^{(r)}$ using the Gaussian sketch to $100\times 100$ symmetric indefinite matrices. We can see that $A_{nys}^{(r)}$ can be unstable.}
\label{nys_fail}
\end{figure}
This type of issue has also been observed in a different context for CUR approximations of rectangular matrices \cite{cortinovis2020low}. Essentially, the issue arises from the possible presence of an eigenvalue of $X^T\!AX$ much smaller than $\sigma_{r}(A)$ or even $\sigma_{\min}(A)$---a phenomenon that is absent when $A$ is SPSD. This blows up the norm of the core matrix $(X^T\!AX)^\dagger$, causing instability.

\paragraph{Contributions}
Our first contribution is to identify the main challenges in finding a good Nystr\"om approximation to symmetric indefinite matrices. We find that the accuracy of the Nystr\"om method is related to controlling the singular values of the core matrix $W = X^T\!AX$, and show that the accuracy can be lost even if the singular values of $W$ are sufficiently larger than the unit roundoff if $W$ severely underestimates the leading eigenvalues of $A$. We then perform an analysis in Section \ref{sec:thm} that overcomes the challenges. The analysis shows that a certain truncation in the core matrix can give a reliable Nystr\"om approximation that guarantees \eqref{relbound} to symmetric indefinite matrices when the singular values decay sufficiently quickly. To our knowledge, this is the first relative-error norm bound for the Nystr\"om method concerning general symmetric matrices that are possibly indefinite.

Our second contribution is providing a practical algorithm (Algorithm \ref{alg:nys}) that gives a Nystr\"om approximation to symmetric indefinite matrices. We show its robustness by comparing the algorithm to some of the existing methods in Section \ref{sec:numexp} and show that the algorithm performs robustly for symmetric indefinite matrices even in the presence of small singular values in the core matrix, whereas the other algorithms can fail. This algorithm is not new in the context of the Nystr\"om method for SPSD matrices. However, to our knowledge, it has not been suggested or studied before for symmetric indefinite matrices.

\paragraph{Existing methods} 
We review three existing ideas for using the Nystr\"om method for indefinite matrices, among others. Cai, Nagy and Xi~\cite{indefnys} derive an error bound for the Nystr\"om method, $A_{nys}^{(s)}$ for symmetric indefinite matrices that arise from a symmetric function. This bound depends on how close the function values of the sampled points are, which is not an attractive dependence and may not be very useful in more general or practical situations. They suggest the plain Nystr\"om method $A_{nys}^{(r)}$, which can be unstable. They also suggest $AX(X^T\!AX)^\dagger_{\epsilon} (AX)^T$ for the Nystr\"om approximation motivated by \cite{GN} with the aim of improving the stability. This version truncates the core matrix $W = X^T\!AX$ so that $\sigma_{\min}((X^T\!AX)_\epsilon)>\epsilon$ where $\epsilon$ is of the order of the unit roundoff. However, this version can give worse approximations than $A_{nys}^{(s)}$ \cite{indefnys} and does not always improve the stability of the Nystr\"om approximation. Second, Ray \emph{et al.}~\cite{smsnystrom} suggest submatrix-shifted (SMS) Nystr\"om to provide an efficient algorithm that deals with symmetric matrices that have only few negative eigenvalues. This method uses an eigenvalue shift based on the minimum eigenvalue of a small principal submatrix before applying the plain Nystr\"om method $A_{nys}^{(r)}$. The downside of this method is that the eigenvalue shift can have serious negative impact on the approximation quality. Lastly, the authors in \cite{indef2,indef1} devise strategies to form the Nystr\"om approximation to symmetric indefinite matrices. However, these methods use eigenvalue information of the original matrix, which is expensive to compute. The three existing methods described above use column sampling matrices for $X$, which is different from random embeddings. In the final section (Section \ref{discussion}), we will revisit their differences in relation to our method and discuss the implications.

\paragraph{Non-Nystr\"om approaches} In \cite{hmt}, a low-rank approximation for symmetric matrices in the form of the randomized SVD is given. This approximation is given by $QQ^TAQQ^T$ where $Q\in \R^{n\times s}$ is the orthonormal matrix in the thin QR decomposition of $AX$ and is known to satisfy a relative-error norm bound. The dominant cost is $O(n^2 s)$ flops for forming $Q^TA$ (assuming $A$ is dense), which becomes prohibitive when $n,s$ are large. Wang, Luo and Zhang derived in \cite{Wang2014} a relative-error norm bound to any symmetric matrices (possibly indefinite) for the prototype model. This model computes the low-rank approximation by first forming the sketch $C = AX$ and then approximating $A$ by $CXC^T$ where $X = C^\dagger A (C^\dagger)^T$. The authors show that if $C$ contains $s = O(k/\epsilon)$ columns of $A$ chosen by adaptive sampling then the prototype model has relative-error of at most $(1+\epsilon)$. The dominant costs for the algorithm in \cite{Wang2014} are $O(n^2 r \log r)$ for computing $C$ and $O(n^2r)$ for computing $C^\dagger A$, which becomes very costly with large $n$.

\paragraph{Non-symmetric approaches} We can use non-symmetric low-rank approximation to symmetric indefinite matrices. Examples are the randomized SVD \cite{hmt}, which is given by $QQ^TA$ using the notation in the previous paragraph and the generalized Nystr\"om method \cite{clarksonwoodruff14,GN,TroppGN} given by $AX(Y^T\!AX)^\dagger Y^TA$ where $X$ and $Y$ are independent random embeddings of different dimensions. The details can be found in the relevant papers. For both methods, since their representation is not symmetric, if we want to force symmetry in their representations (e.g. by taking the symmetric part $(M^T+M)/2$), we may risk doubling the rank in the approximation. In addition, as mentioned in the previous paragraph, the randomized SVD has the cost of computing $Q^TA$, which becomes prohibitive when $n,s$ are large. For generalized Nystr\"om, we approximately double the number of matrix-vector multiplications needed as $A$ needs to be multiplied by two independent random embeddings $X$ and $Y$ and this, in turn doubles the storage requirement (in fact, more than double because $Y$ (or $X$) is recommended to be larger~\cite{GN}). In this paper, we focus on symmetric low-rank approximations.

\paragraph{Notation} Throughout, we use $\norm{\cdot}_2$ for the spectral norm or the vector-$\ell_2$ norm, $\norm{\cdot}_*$ for the nuclear norm (sum of singular values) and $\norm{\cdot}_\text{F}$ for the Frobenius norm. We use dagger $^\dagger$ to denote the pseudoinverse of a matrix and $\lowrank{A}{r}$ to denote the best rank-$r$ approximation to $A$ in any unitarily invariant norm, i.e., the approximation derived from truncated SVD~\cite{hornjohn}. Unless specified otherwise, $\sigma_i(A)$ denotes the $i$th largest singular value of the matrix $A$ and $\lambda_i(A)$ the $i$th largest eigenvalue in magnitude. Lastly, we use MATLAB style notation for matrices and vectors. For example, for the $k$th to $(k+j)$th columns of a matrix $A$ we write $A(:,k:k+j)$.


\section{Proposed method}
\label{sec:alg}
When we use the Nystr\"om method on symmetric indefinite matrices, it can lead to problems. The main concern is in the core matrix $W = X^T\!AX$ because the positive and negative eigenvalues of $A$ can `cancel' each other out when forming $W$, making the eigenvalues of $W$ much smaller than $\sigma_r(A)$. This causes inaccuracies and instabilities when computing the pseudo-inverse of $W$. More specifically, if we use column sampling then $W$ would be a principal submatrix of $A$. By Cauchy's interlacing theorem, the spectrum of $W$ is contained in the interval $[\lambda_{\min}(A),\lambda_{\max}(A)]$ which contains both positive and negative values since $A$ is indefinite. Therefore the magnitude of the eigenvalues of $W$ can be significantly smaller in magnitude from those of $A$, resulting in the matrix $W^\dagger$ blowing up. In addition, the computation of the pseudo-inverse of $W$ can be numerically unstable if $\sigma_{\min} (W)<u$ where $u$ is the unit roundoff. Thus, the main challenge is to ensure that $W^\dagger$ does not ruin the Nystr\"om approximation quality. One approach is to introduce a potentially large shift to make $A$ SPSD, but this can severely affect the approximation quality unless $A$ is nearly definite, that is, the negative eigenvalues of $A$ are very small in magnitude, for example, on the order of machine precision. This idea is used for SPSD matrices where a small shift is introduced to gain numerical stability, however the shift here needs to be small enough to ensure that accuracy is still high \cite{stablenys,TroppRR2}. 

In light of these observations, we propose
\begin{equation*}
    A_{indef}^{(c,r)}  = AX \lowrank{X^T\!AX}{r}^\dagger (AX)^T
\end{equation*} for symmetric \emph{indefinite} matrices $A\in \R^{n\times n}$ where $X\in \R^{n\times cr}$ is a random embedding, $c>1$ is a modest constant, say $c = 1.5$ or $c = 2$, and $r$ is the target rank. When $A$ is SPSD and the sketch size $s$ is proportional to the target rank, $A_{indef}^{(c,r)}$ is equivalent to $A_{nys}^{(cr,r)}$. This rank-restricted version truncates the bottom $(c-1)r$ singular values of $W\in \mathbb{R}^{cr\times cr}$, which can potentially be harmful even if they are sufficiently larger than the unit roundoff. This is different to the truncation used in \cite{indefnys} as they use truncation based on the magnitudes of the singular values of $W$, whereas for our method, the number of bottom singular values we truncate is proportional to the target rank. This intuition is justified by Andoni and Nguy$\hat{e}$n~\cite{andoni}, who prove that the largest eigenvalues (whose proportional to the sketch size) of symmetric matrices with rapidly decaying singular values  are approximately preserved under conjugation by a Gaussian sketch with an appropriate normalization factor. 

Now, let us define a quantity that will measure how well the singular values are preserved in the core matrix $W$ of the Nystr\"om method. For a symmetric matrix $A\in \R^{n\times n}$, a target rank $r$ and a sketch size $s \geq r$, define
\begin{equation}
    \kappa_W (A,r,s):= \frac{\max\limits_{1\leq i \leq r}\sigma_i(X^T\!AX)/\sigma_i(A)}{\min\limits_{1\leq j \leq r}\sigma_j(X^T\!AX)/\sigma_j(A)} = \max\limits_{1\leq i \leq r}\max\limits_{1\leq j \leq r}\frac{\sigma_i(X^T\!AX)}{\sigma_j(X^T\!AX)}\frac{\sigma_j(A)}{\sigma_i(A)}
\end{equation}
where $X\in \R^{n\times s}$ is a Gaussian embedding matrix. This quantity measures the ratio between the worst over-approximation and the worst under-approximation of the leading singular values of $A$ using the singular values in the core matrix $W$. $\kappa_W(A,r,s)$ will help us see how much the singular values of $W$ have deviated from the leading singular values of $A$, which directly affects the Nystr\"om approximation quality as we illustrate below.

In Figure \ref{Wcomp}, we show how important it is to ensure that the spectrum of $W$ does not ruin the approximation quality. In this experiment\footnote{All experiments were performed in MATLAB version 2021a using double precision arithmetic.}, $A\in \R^{1000\times 1000}$ is a symmetric indefinite matrix constructed as in the left plot of Figure \ref{nys_fail}. The smallest singular value in the core matrix was larger than $10^{-7}$ throughout this experiment. For the truncated cases, $A_{indef}^{(1.5,r)}$ and $A_{nys}^{(r+5,r)}$, the approximation is robust as seen in Figure \ref{fig:2a}. This robustness we see is illustrated in Figure \ref{fig:2b} where the singular values of $W=X^T\!AX$ behaves well in the sense that there is no wild fluctuations in $\kappa_W(A,r,r+5)$ and $\kappa_W(A,r,1.5r)$. However, when the sketch size is not proportional to the target rank ($s = r+5$), the relative approximation error for $A_{nys}^{(r+5,r)}$ (when compared with the truncated SVD) and $\kappa_W(A,r,r+5)$ grow as we increase the target rank. This problem can become worse and the approximation can become unstable when we use SRTT matrices for efficiency with the sketch size $s = r+5$ (See Figure \ref{proposedalg} and Subsection \ref{subsec:randemb}). When the sketch size is proportional to the target rank, $\kappa_W(A,r,1.5r)$ and the relative approximation error for $A_{indef}^{(1.5,r)}$ are approximately a constant, which motivates us to choose the oversample size to be proportional to the target rank. On the other hand, without the truncation in the core matrix we see that $\kappa_W(A,r,r)$ behaves wildly. This indicates that the singular values of $W$ inaccurately approximates the leading singular values of $A$. As a result, the Nystr\"om approximations $A_{nys}^{(1.5r)}$ and $\lowrank{A_{nys}^{(1.5r)}}{r}$ can yield unstable results. Empirically, this provides a reason to favour $A_{indef}^{(c,r)}$ over other variants of the Nystr\"om method for symmetric indefinite matrices.

\begin{figure}[tbhp]
\hspace*{-0.6cm}
\subfloat[]{\label{fig:2a}\includegraphics[scale = 0.25]{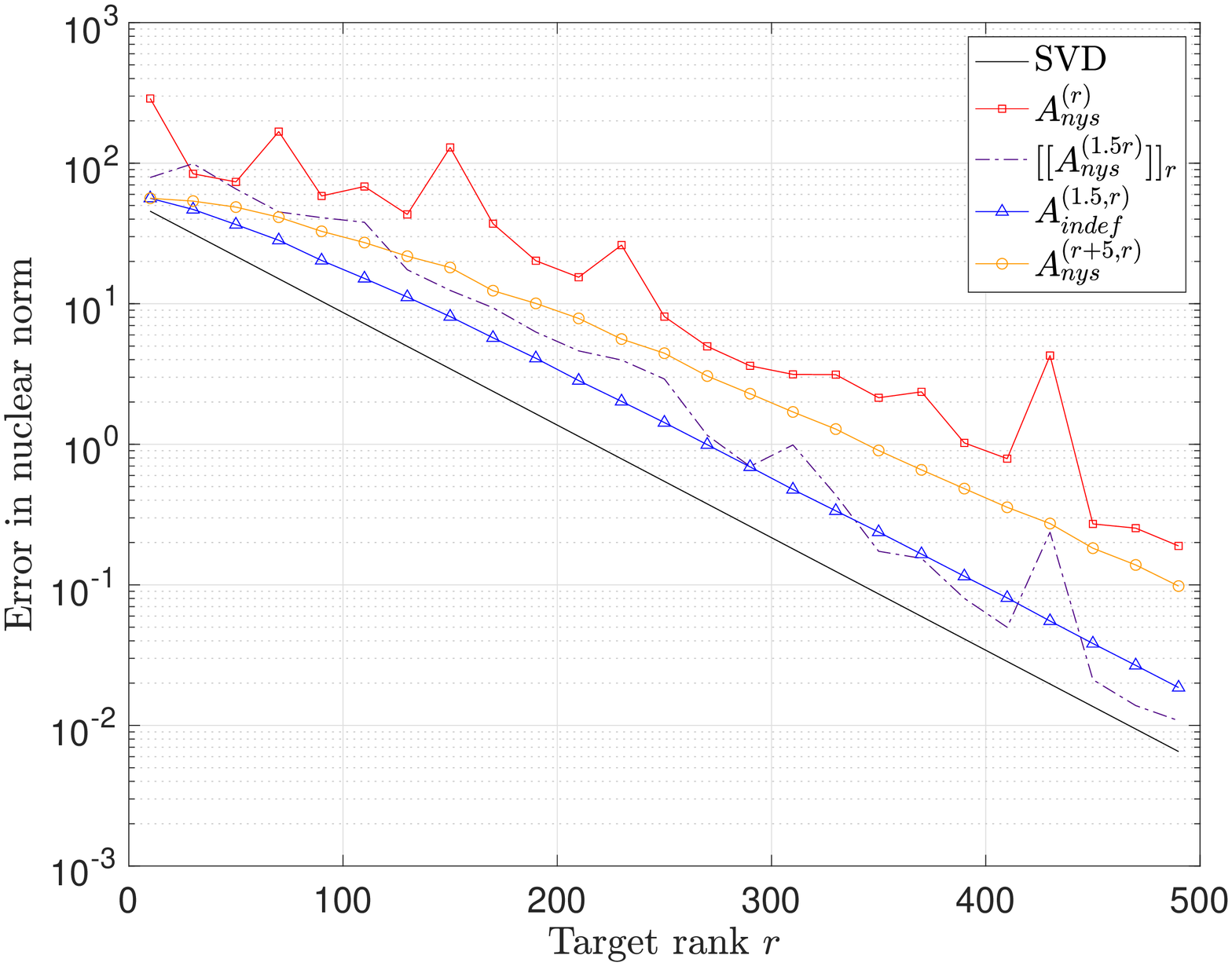}}
\hspace*{-0.5cm}
\subfloat[]{\label{fig:2b}\includegraphics[scale = 0.25]{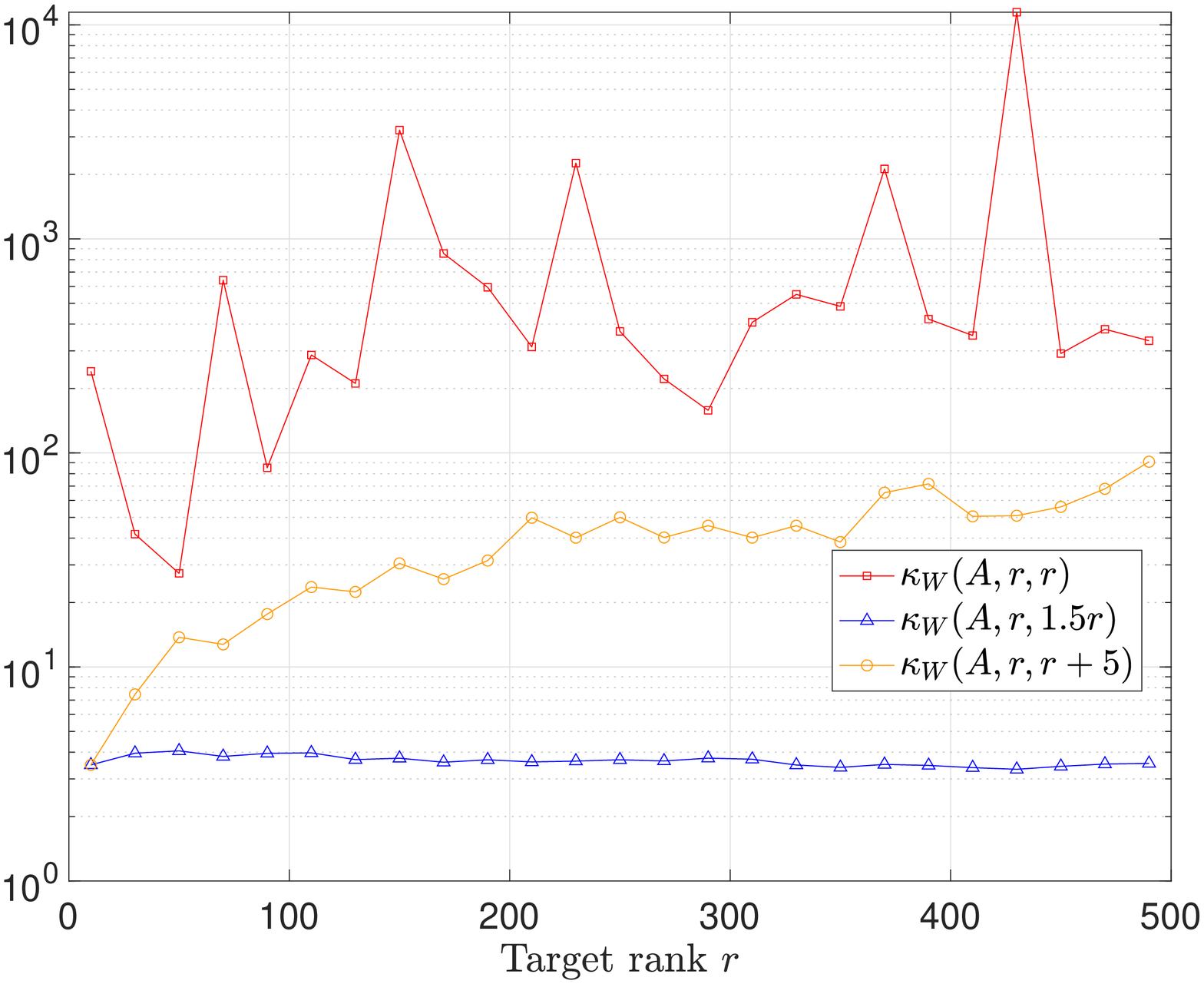}}
\centering 
\caption{Accuracy of the Nystr\"om approximations $A_{indef}^{(1.5,r)}$, $A_{nys}^{(r)}$, $A_{nys}^{(r+5,r)}$ and $\lowrank{A_{nys}^{(1.5r)}}{r}$ to a symmetric indefinite matrix $A\in \R^{1000\times 1000}$. Figure \ref{fig:2a} shows the Nystr\"om error in the nuclear norm and Figure \ref{fig:2b} shows the accuracy of the singular values of $W = X^T\!AX$ when compared with the singular values of $A$. We observe that the truncation in the core matrix $W$ can significantly increase the robustness and the accuracy of the Nystr\"om approximation.}
\label{Wcomp}
\end{figure}

\subsection{Random embeddings} \label{subsec:randemb}
A subspace embedding \cite{subem} is a linear map which preserves the $2$-norm of every vector in a given subspace, that is, $S\in \R^{s\times n}$ is a subspace embedding for the span of $A\in \R^{n\times n}$ with distortion $\epsilon \in (0,1)$ if
\begin{equation} \label{reldist}
    (1-\epsilon)\norm{Ax}_2 \leq \norm{SAx}_2 \leq (1+\epsilon)\norm{Ax}_2
\end{equation} for every $x\in \R^{n}$. A random embedding is a subspace embedding drawn at random that satisfy Equation \eqref{reldist} with high probability.

Random embeddings have more attractive properties than column sampling matrices \cite{nysprecond,martinsson_tropp_2020}, one of which is that the results obtained using random embeddings generally have smaller variance than the results obtained using column sampling. Below are few important examples of random embeddings.

\subsubsection{Gaussian matrices} \label{subsubsec:Gaussian}
A Gaussian embedding is a random matrix $G\in \R^{s\times n}$ with i.i.d. entries $G_{ij} \sim N(0,1/s)$. The scaling ensures that $\E[\norm{Gx}_2^2] = \norm{x}_2^2$ for every $x\in \R^n$. Gaussian embedding is the most widely used random embedding for theoretical analysis\footnote{Other random embeddings often lack strong theoretical guarantees, however they behave similarly to a Gaussian embedding in practice. For this reason, Gaussian theory is often used to provide a rule of thumb for the general behavior \cite{martinsson_tropp_2020}.} and often has optimal guarantees \cite{hmt,martinsson_tropp_2020}.  The cost of applying a Gaussian embedding to an $n\times n$ matrix is $O(n^2s)$. This becomes prohibitive for large $n$, so a more structured random embeddings are often used in practice.

\subsubsection{SRTTs}
A subsampled randomized trigonometric transform (SRTT) matrix is an $n\times s$ matrix with $n\geq s$ of the form \begin{equation}
    S = \sqrt{\frac{n}{s}}DFR^T
\end{equation} where $D\in \R^{n\times n}$ is a random diagonal matrix whose entries are independent and take $\pm 1$ with equal probability, $F\in \C^{n\times n}$ is a unitary trigonometric transform and $R\in \R^{s\times n}$ is a random restriction. In the complex case, $F$ is the unitary discrete Fourier transform (DFT) and in the real case, $F$ is commonly the discrete cosine transform (DCT). The sketch size needs to be $s = O(r\log r)$ for theoretical guarantees \cite{SRFTtropp}, but in practice $s = O(r)$ often suffices\footnote{For difficult examples, say a coherent example, the $\log r$ factor is necessary. (See Figure \ref{proposedalg})} \cite{hmt,martinsson_tropp_2020}. The cost of applying SRTT to an $n\times n$ matrix is $O(n^2\log r)$ \cite{coherence} using the subsampled FFT algorithm \cite{fastfft}.

\subsubsection{Sparse maps}
Sparse maps are sparse matrices with nonzero entries that are random signs \cite{clarksonwoodruff14,martinsson_tropp_2020,osnap,woodruff}. They are particularly useful for sparse data and they take the form
\begin{equation}
    S = \frac{1}{\sqrt{s}}[s_1,...,s_n] \in \R^{s\times n}
\end{equation} where the columns of $S$, the $s_i$'s are statistically independent and has exactly $\xi$ nonzero entries that take $\pm 1$ with equal probability, placed uniformly at random coordinates. We need the sketch size to be $s = O(r \log r)$ and the sparsity parameter to be $\xi = O(\log r)$ for theoretical guarantees \cite{cohen16}. In \cite{streamingtropp}, $\xi = \min\{s,8\}$ was recommended in practice. The cost of applying sparse maps to a matrix $A$ is $O(\xi \cdot nnz(A))$ where $nnz(A)$ is the number of nonzero entries of $A$ if sparse data structures and arithmetic are available. 

\subsection{Suggested algorithm}
For a general symmetric matrix $A\in \R^{n\times n}$ with the target rank $r$, we suggest
\begin{equation} \label{eq:propalg}
    A_{indef}^{(c,r)}  = AX \lowrank{X^T\!AX}{r}^\dagger (AX)^T = C\lowrank{W}{r}^\dagger C^T
\end{equation} where $X\in \R^{n\times s}$ is a random embedding with the sketch size $s= cr$ where $c>1$ is a modest constant. The algorithm is given in Algorithm \ref{alg:nys}. For the choice of random embeddings, if $A$ is sparse then we suggest sparse maps with sparsity $\xi = \min\{cr,8\}$ and when $A$ is dense we suggest SRTT matrices. The recommended sketch size is $s = 1.5r$ for efficiency, but if one wants a better approximation quality guarantee then the sketch size can be increased to, for example, $s = 2r$ or $s = 4r$. Note that the truncation is performed irrespectively of the singular values of $W$ (unlike previous studies, e.g. \cite{indefnys}); our analysis in Section~\ref{sec:thm} suggests that it is important that the number of singular values to be truncated $(s-r)=(c-1)r$ is proportional to $r$.

\begin{algorithm}[ht]\footnotesize
  \caption{Judiciously truncated Nystr\"om approximation for indefinite matrices}
  \label{alg:nys}
  \begin{algorithmic}[1]
\Require{Symmetric matrix $A\in \R^{n\times n}$, target rank $r<n$, sketch size $r<s<n$ (rec. $s = 1.5r$)}
\Ensure{$C\in \R^{n\times s}$ and $W_r^\dagger \in \R^{s\times s}$ with $\rank(W) \leq r$ as in \eqref{eq:propalg}}
\vspace{0.5pc}
\State Draw a random embedding $X\in \R^{n\times s}$ \Comment{Sparsity $\xi = \min\{s,8\}$ for sparse maps}
\State $C \gets AX$
\State $W \gets X^TC$
\State $[V,\Lambda] = \mathrm{eig}(W)$, eigendecomposition of $W$
\State $W_r^\dagger = V(:,1:r)\Lambda(1:r,1:r)^{\dagger}V(:,1:r)^T$, pseudoinverse of the best rank-$r$ approximation of $W$
\State Output $C\in \R^{n\times s}$ and $W_r^\dagger \in \R^{s\times s}$
\end{algorithmic}
\end{algorithm}

\paragraph{Complexity}
When a sparse map is used, the cost of Algorithm \ref{alg:nys} is $O(\xi\cdot nnz(A)+r^3)$ which consists of $O(\xi\cdot nnz(A))$ flops for forming the sketch and $O(r^3)$ flops for the eigendecomposition. With an SRTT sketch, the total cost is $O(n^2\log r +r^3)$, where $O(n^2\log r)$ is needed for forming the sketch and $O(r^3)$ for computing the eigendecomposition.\footnote{Since we are using random embeddings for robustness, Algorithm \ref{alg:nys} is strictly more expensive than classical Nystr\"om methods (column subsampling) if the columns can be sampled quickly.}

\paragraph{Eigendecomposition of $A_{indef}^{(c,r)}$} Algorithm \ref{alg:nys} as presented does not output the eigendecomposition of $A_{indef}^{(c,r)}$. To do this, we require an extra $O(nr^2 + r^3)$ flops. We need $O(nr^2)$ flops to compute the thin QR decomposition of $C = QR$, $O(r^3)$ flops to form and compute the eigendecomposition of $R\lowrank{W}{r}^\dagger R^T = U\Sigma U^T$ and $O(nr^2)$ flops to form $U_1 = QU$ giving us the eigendecomposition, $A_{indef}^{(c,r)} = U_1 \Sigma U_1^T$.

In Figure \ref{proposedalg}, we illustrate Algorithm \ref{alg:nys} for the SRFT sketch and the sparse map. The experiment was conducted with synthetic $2000\times 2000$ symmetric indefinite matrices. The top two plots have eigenvalues that decay geometrically from $1$ to $10^{-12}$ each assigned a random sign with equal probability and the eigenvectors are in a $2\times 2$ block diagonal form, $\mathrm{diag}(I_{200},U)$ where $I_{200}$ is the $200\times 200$ identity matrix and $U\in \R^{1800\times 1800}$ is a Haar distributed orthogonal matrix. This eigenvector matrix is a more coherent example than our previous examples and is known to be a difficult example for SRTT matrices \cite{coherence} (when the eigenvectors are Haar distributed, SRTT (or essentially any sketch) behaves the same as a Gaussian sketch, giving good results). The bottom two plots were generated using the same eigenvector matrix, but with eigenvalues equal to $\pm 1$ for the first $100$, $\pm 10^{-4}$ for the next $100$, $\pm 10^{-8}$ for the $100$ eigenvalues after that and $\pm 10^{-16}$ for the last $1700$ eigenvalues each assigned a random sign with equal probability. In the two left plots, we see that the SRFT sketch can fail if the sketch size is not large enough. This instability in the approximation can be fixed by enlarging the sketch size. We see that $s = r+5$ does not do well, but when $s = 4r$ the approximation becomes more accurate and robust. In the right plot, we see that the SRFT sketch with the sketch size $s = r\log r$, which comes with theoretical guarantees has excellent approximation quality. Finally, we see that the sparse map with sparsity $\xi = 8$ gives a robust approximation throughout, which can be improved by enlarging the sketch size.

\begin{figure}[htbp]
\hspace*{-1cm}
\includegraphics[scale = 0.38]{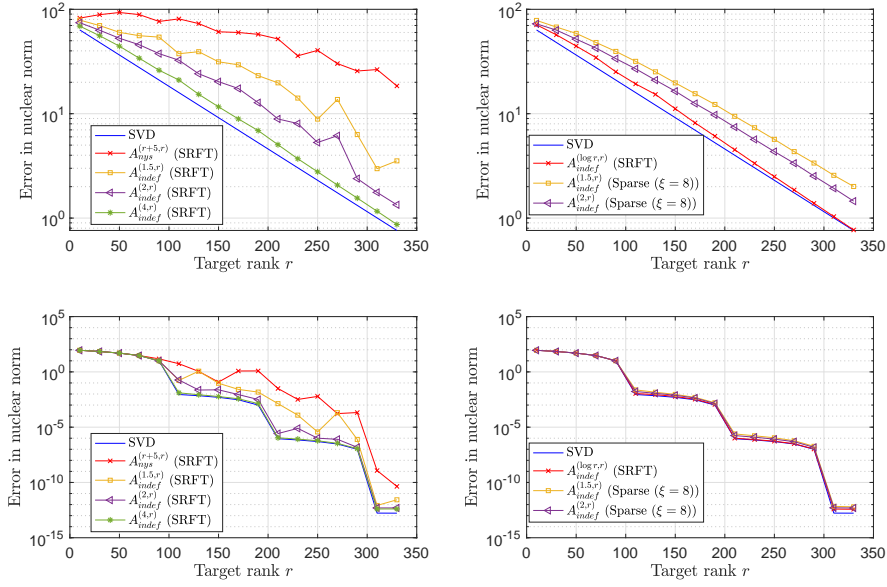}
\centering
\caption{Algorithm \ref{alg:nys}: A difficult (coherent) example for the SRFT sketch. The approximation can be unstable if the sketch size is too small for the SRFT sketch (left plots). This problem can be fixed by enlarging the sketch size. The right plots show that sparse maps have no issue with this example and the approximation is robust. 
}
\label{proposedalg}
\end{figure}


\section{Analysis}
\label{sec:thm}
For a general symmetric matrix $A\in \R^{n\times n}$, there are no known relative-error norm bounds for the Nystr\"om method. Here we show that for general symmetric matrices, the Nystr\"om method when used with a Gaussian sketch satisfies in expectation a relative-error nuclear norm bound under some orthogonal projection in the core matrix, when the singular values decay sufficiently fast. The analysis that follows establishes the accuracy not of Algorithm \ref{alg:nys}, but of a closely related variant of the Nystr\"om method.
The last paragraph of this section discusses this in more detail.

Let $A\in \R^{n\times n}$ be a symmetric matrix and let the eigendecomposition of $A$ be 
\begin{equation} \label{eigendecomp}
    A = V\Lambda V^T = [V_1,V_2,V_3] \begin{bmatrix}
    \Lambda_1 & 0 & 0 \\
    0 & \Lambda_2 & 0 \\
    0 & 0 & \Lambda_3
    \end{bmatrix} [V_1,V_2,V_3]^T
\end{equation} where $V\in \R^{n\times n}$ is the orthogonal eigenvector matrix of $A$ and $\Lambda \in \R^{n\times n}$ is a diagonal matrix containing the eigenvalues of $A$. The matrices with subscript $1$ have $r$ columns, those with subscript $2$ have $(c_1-1) r$ columns and subscript $3$ have $(n-c_1 r)$ columns where $r< c_1 r<n$ and $c_1>1$ is a constant such that $c_1r$ is a positive integer. The eigenvalues are ordered in non-increasing order with respect to their magnitude, so we have $\sigma_i(A) = |\lambda_i(A)|$ for all $i$.

Now we state our main theorem, and discuss the three key facts that will accompany our proof before getting to the proof immediately.

\begin{theorem} \label{mainthm}
Let $A \in \R^{n\times n}$ be a symmetric matrix as in \eqref{eigendecomp} and assume that $\lambda_r(A) \neq 0$. Let $c_1$ and $c_2$ be constants with $1<c_1 < c_2<\frac{n}{r}-1$ such that $c_1 r$ and $c_2 r$ are positive integers. Define $X_i:=V_i^TX$ for $i = 1,2,3$ where $X\in\mathbb{R}^{n\times c_2 r}$ is a Gaussian matrix, and set $B = X_3 Q_{\perp} (X_1 Q_{\perp})^\dagger~\in~\mathbb{R}^{(n-c_1 r)\times r}$ where $Q_{\perp}\in \mathbb{R}^{c_2 r \times (c_2-c_1+1)r}$ is an orthogonal complement of $X_2^T \in \mathbb{R}^{c_2 r \times (c_1-1)r}$. Let $(X_1 Q_{\perp})^\dagger = \hat{Q}\hat{R}$ be the thin QR decomposition of $(X_1 Q_{\perp})^\dagger$ and set $U := Q_{\perp}\hat{Q} \in \mathbb{R}^{c_2 r \times r}$. Then the orthogonal projector $P = UU^T \in \mathbb{R}^{c_2 r \times c_2 r}$ satisfies
\begin{equation} \label{thmbound}
    \mathbb{E}\left[\norm{E}_*|\Omega_F\right] \leq (1+\epsilon_{r,A}) \norm{A-\lowrank{A}{r}}_*
\end{equation} where 
\begin{equation}
E:= A-AX(PX^T\!AXP)^\dagger X^T\!A
\end{equation} is the associated Nystr\"om error, $\Omega_F$ is an event defined as
\begin{equation}
     \Omega_F:= \left\{\norm{|\Lambda_3|^{1/2}B}_F^2 \leq 0.5|\lambda_r(A)|  \right\}
\end{equation} where $|\Lambda_3|$ is defined element-wise and 
\begin{equation}
    \epsilon_{r,A} := 2b\sqrt{r} \left(1+\frac{|\lambda_{c_1 r+1}(A)|}{|\lambda_r(A)|} + \frac{2}{\sqrt{b}}\right)\frac{\norm{\Lambda_3}_*}{\norm{\Lambda_2}_*+\norm{\Lambda_3}_*}
\end{equation} where $b = \frac{r}{(c_2-c_1)r-1}$.
\end{theorem}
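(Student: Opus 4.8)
The plan is to reduce the projected, indefinite Nystr\"om error to a \emph{plain} Nystr\"om error for a cleverly chosen sketch that, read in the eigenbasis of $A$, kills the middle block $V_2$ and has a well-conditioned leading block. Since $U=Q_\perp\hat Q$ has orthonormal columns, $(PX^T\!AXP)^\dagger = U(U^T\!X^T\!AXU)^\dagger U^T$, so with $Y:=XU$ one gets $AX(PX^T\!AXP)^\dagger X^T\!A = AY(Y^T\!AY)^\dagger Y^T\!A$; that is, $E$ is the plain Nystr\"om error for the sketch $Y$. Writing $Y$ in the eigenbasis, $V^TY$ has middle block $X_2Q_\perp\hat Q=0$ by the defining property $X_2Q_\perp=0$ of the orthogonal complement. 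Using $MM^\dagger=I_r$ for $M=X_1Q_\perp$ (full row rank almost surely) together with the factorization $M^\dagger=\hat Q\hat R$ gives $X_1Q_\perp\hat Q=\hat R^{-1}$ and $X_3Q_\perp\hat Q=B\hat R^{-1}$, hence $V^TY=Z\hat R^{-1}$ where $Z$ stacks $I_r$, $0$, $B$. Because the plain Nystr\"om map is invariant under right multiplication of the sketch by an invertible matrix and covariant under the orthogonal change of basis $V$, I can replace $A$ by $\Lambda$ and $Y$ by $Z$, reducing the claim to controlling $\norm{\Lambda - \Lambda Z(Z^T\Lambda Z)^\dagger Z^T\Lambda}_*$.

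Next I would compute this error blockwise. With $\hat\Lambda_1:=Z^T\Lambda Z=\Lambda_1+B^T\Lambda_3 B$, the middle block decouples and contributes exactly $\Lambda_2$, so $\norm{E}_*=\norm{\Lambda_2}_*+\norm{F}_*$ where $F$ is the $2\times2$ block error built from $\Lambda_1,\Lambda_3,B$. On $\Omega_F$ the hypothesis $\norm{|\Lambda_3|^{1/2}B}_F^2\le 0.5|\lambda_r(A)|$ forces $\norm{B^T\Lambda_3 B}_2\le 0.5|\lambda_r|$, whence $\hat\Lambda_1$ is invertible with $\norm{\hat\Lambda_1^{-1}}_2\le 2/|\lambda_r|$ and $\norm{\Lambda_1\hat\Lambda_1^{-1}}_2=\norm{I-B^T\Lambda_3 B\hat\Lambda_1^{-1}}_2\le 2$. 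Splitting $F=\mathrm{diag}(0,\Lambda_3)+R$ and using the identity $\Lambda_1-\Lambda_1\hat\Lambda_1^{-1}\Lambda_1=\Lambda_1\hat\Lambda_1^{-1}B^T\Lambda_3 B$, I would bound each block of $R$ in nuclear norm through the submultiplicative inequality $\norm{PQ}_*\le\norm{P}_F\norm{Q}_F$, so that every term is governed by $\beta:=\norm{|\Lambda_3|^{1/2}B}_F$; for instance $\norm{B^T\Lambda_3 B}_*\le\beta^2$, $\norm{B^T\Lambda_3}_*\le\beta\sqrt{\norm{\Lambda_3}_*}$, and $\norm{\Lambda_3 B\hat\Lambda_1^{-1}B^T\Lambda_3}_*\lesssim (|\lambda_{c_1r+1}|/|\lambda_r|)\beta^2$. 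This yields a pathwise bound on $\Omega_F$ of the schematic form $\norm{R}_*\lesssim \beta^2\bigl(1+|\lambda_{c_1r+1}|/|\lambda_r|\bigr)+\beta\sqrt{\norm{\Lambda_3}_*}$.

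The final ingredient is probabilistic. As $X$ is Gaussian and $V$ orthogonal, $X_1,X_2,X_3$ are independent Gaussian blocks and $Q_\perp$ depends only on $X_2$; conditioning on $X_2$, both $X_1Q_\perp$ and $X_3Q_\perp$ are independent Gaussians. Taking the expectation over $X_3$ first gives $\E[\beta^2\mid X_1,X_2]=\norm{\Lambda_3}_*\,\norm{(X_1Q_\perp)^\dagger}_F^2$ (since $\E[(X_3Q_\perp)^T|\Lambda_3|(X_3Q_\perp)]=\norm{\Lambda_3}_*\,I$), and then the inverse-Wishart identity $\E\norm{M^\dagger}_F^2=p/(q-p-1)$ for a $p\times q$ Gaussian $M$ (here $p=r$, $q=(c_2-c_1+1)r$) gives $\E[\beta^2]=b\,\norm{\Lambda_3}_*$ with $b=r/((c_2-c_1)r-1)$; Jensen then controls $\E[\beta]$. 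Combining the pathwise bound with these moments and $\norm{A-\lowrank{A}{r}}_*=\norm{\Lambda_2}_*+\norm{\Lambda_3}_*$ produces the relative-error bound with the stated $\epsilon_{r,A}$. I expect the main obstacle to be precisely the interaction of indefiniteness with the core matrix: establishing that $\hat\Lambda_1=\Lambda_1+B^T\Lambda_3 B$ is invertible and uniformly well-conditioned on $\Omega_F$ (there is no symmetric square root $\Lambda^{1/2}$ to exploit, so the clean orthogonal-projection argument available in the SPSD case fails), and keeping the cross-terms that couple the captured block $\Lambda_1$ to the tail $\Lambda_3$ under control. The rest is careful but routine nuclear-norm bookkeeping, where slightly lossy estimates (such as $\norm{\cdot}_*\le\sqrt{r}\norm{\cdot}_F$ applied to the rank-$\le 2r$ correction $R$) account for the $\sqrt r$ factor appearing in $\epsilon_{r,A}$.
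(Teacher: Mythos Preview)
Your proposal is correct and follows essentially the same strategy as the paper: reduce to a plain Nystr\"om error in the eigenbasis with effective sketch $Z=[I_r;0;B]^T$, compute the resulting block error with core $\hat\Lambda_1=\Lambda_1+B^T\Lambda_3B$ (the middle block decouples to $\Lambda_2$), bound the remaining blocks pathwise on $\Omega_F$ using the invertibility and $O(1)$ conditioning of $\hat\Lambda_1$, and finish with the Gaussian moment identity $\E\|SB\|_F^2=b\|S\|_F^2$. The only cosmetic difference is that the paper arrives at the same block expression for $V^TEV$ via an oblique-projector identity borrowed from the generalized-Nystr\"om analysis, whereas you get there more directly by invoking invariance of the Nystr\"om map under right-multiplication of the sketch by an invertible matrix; the subsequent bookkeeping (including the $\sqrt r$ loss from $\|\cdot\|_*\le\sqrt r\,\|\cdot\|_F$ and the linearization $x/(1-x)\le 2x$ on $[0,1/2]$, which is your $\|\hat\Lambda_1^{-1}\|_2\le 2/|\lambda_r|$) is the same.
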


In the above theorem, $c_1$ and $c_2$ are oversampling factors which are of modest size, say $c_1 = 1.5$ and $c_2 = 2$. We need two factors because we need $X_1Q_{\perp}\in~\R^{r\times (c_2-c_1+1)r}$ and $X_3Q_{\perp}\in \R^{(n-c_1r)\times (c_2-c_1+1)r}$ to be \emph{rectangular} Gaussian matrices, which makes them well-conditioned with high probability \cite{ds01}. We can view $c_1$ as $c$ in Algorithm \ref{alg:nys} and $c_2$ to be the oversampling factor introduced to make the analysis possible. By making $c_1$, $c_2$ and $(c_2-c_1)$ larger, we can improve the bound in the above Theorem. The orthogonal projector $P = UU^T$ truncates the core matrix $W = X^T\!AX$ by removing the largest `unwanted' eigenvalues of $A$, i.e. the eigenvalues in $\Lambda_2$, using $X_\perp$ factor in $U$. This helps the core matrix to not be corrupted by the interaction between the target and the large `unwanted' singular values and singular vectors of $A$, which can happen when forming $X^T\!AX$. Lastly, the $\epsilon_{r,A}$ in the theorem plays a similar role to the distortion $\epsilon$ in Equation \eqref{relbound} and $\Omega_F$ is roughly the event that the eigenvalues of $A$ decay rapidly enough. If we assume that $A$ has a low-rank structure, for example, $|\lambda_r(A)|\gg |\lambda_{c_1r+1}(A)|$, then $\Omega_F$ would hold with high probability and $\epsilon_{r,A}$ would be a moderately-sized constant, which tells us that the relative-error nuclear norm bound in \eqref{thmbound} is good.

We now introduce three key facts that will be useful for our proof. The first fact follows closely the analysis in \cite{GN}. Let $\mathcal{P}:= \Lambda V^T X(PX^T\!AXP)^\dagger X^TV$ be an oblique projector. Then we can rewrite the associated Nystr\"om error as 
\begin{equation}
    E = V(I-\mathcal{P})\Lambda V^T.
\end{equation} 
As shown in \cite{GN}, it is straightforward to see that we can rewrite the associated Nystr\"om error as
\begin{equation}
    V^T E V = (I-\mathcal{P})\Lambda = (I-\mathcal{P})\Lambda (I-V^TXUM)
\end{equation} for any $M\in \R^{r\times n}$. Let $V_r = [I_r,0]^T \in \R^{n\times r}$ and set $M = (V_r^TV^TXU)^\dagger V_r^T$ then we get
\begin{equation} \label{idea1}
    V^TEV = (I-\mathcal{P})\Lambda (I-V_rV_r^T)(I-V^TXU(V_r^TV^TXU)^\dagger V_r^T).
\end{equation}
This modification of the associated Nystr\"om error will be important for our proof.

The second fact is the following. Let $f(x)$ be convex in the interval $[x_1,x_2]$ with $x_1<x_2$. Define $g(x)$ on $[x_1,x_2]$ to be the linear function joining the endpoints of $f$ on $[x_1,x_2]$, that is, $g(x) = \frac{f(x_2)-f(x_1)}{x_2-x_1}x+\frac{f(x_1)x_2-f(x_2)x_1}{x_2-x_1}$. Then $f(x)\leq g(x)$ on $[x_1,x_2]$. Let $Y$ be a random variable with $Y\in [x_1,x_2]$ almost surely. Then $f(Y) \leq g(Y)$ almost surely. Furthermore, if $Y\in [x_1,x_2]$ conditional on an event $\Omega$, then conditional on $\Omega$ we get
\begin{equation} \label{idea2}
    f(Y)\leq g(Y).
\end{equation}

The last fact is based on expected norm bounds for Gaussian matrices from \cite[App.~A]{hmt}. We can deduce the following lemma.
\begin{lemma} \label{idea3}
Let $B$ be the matrix as in Theorem \ref{mainthm} and let $S$ be a fixed real matrix such that $SB$ is defined. Then
\begin{equation}
    \E\norm{SB}_F^2 = b\norm{S}_F^2 
\end{equation} where $b=\frac{r}{(c_2-c_1)r-1}$ as in Theorem \ref{mainthm}.
\end{lemma}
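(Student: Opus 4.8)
The plan is to reduce the claim to a single clean fact about Gaussian matrices: if $G$ is a Gaussian matrix and $S,T$ are fixed matrices of compatible sizes, then $\E\norm{SGT}_F^2 = \norm{S}_F^2\norm{T}_F^2$, which follows from the rotational invariance of the Gaussian distribution and the linearity of expectation (this is exactly the content of \cite[App.~A]{hmt}). The work is therefore to express $B$ in a form where the only randomness is an isotropic Gaussian factor, with everything else deterministic, so that $\E\norm{SB}_F^2$ separates into $\norm{S}_F^2$ times a scalar that evaluates to $b$.

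First I would unpack the definition $B = X_3 Q_{\perp}(X_1 Q_{\perp})^\dagger$ and recall that $X_i = V_i^T X$ with $X$ Gaussian and $V=[V_1,V_2,V_3]$ orthogonal, so the blocks $X_1,X_2,X_3$ are jointly Gaussian and mutually independent (distinct blocks of rows of an orthogonally-rotated Gaussian). The crucial structural point is that $Q_{\perp}$ is built purely from $X_2$ as an orthonormal basis for the complement of $\mathrm{Range}(X_2^T)$; hence $Q_{\perp}$ is a function of $X_2$ alone and is \emph{independent} of $X_1$ and $X_3$. Conditioning on $X_2$ (equivalently, on $Q_{\perp}$), the matrices $X_1 Q_{\perp}\in\R^{r\times(c_2-c_1+1)r}$ and $X_3 Q_{\perp}\in\R^{(n-c_1r)\times(c_2-c_1+1)r}$ are still Gaussian, because right-multiplication of a Gaussian by a matrix with orthonormal columns preserves Gaussianity, and they remain independent of each other.

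Next I would handle the pseudoinverse. Write $(c_2-c_1+1)r =: m$ for the (wide) inner dimension; since $X_1 Q_{\perp}$ is $r\times m$ with $m>r$ it has full row rank almost surely, so $(X_1 Q_{\perp})^\dagger = (X_1 Q_{\perp})^T\big((X_1 Q_{\perp})(X_1 Q_{\perp})^T\big)^{-1}$ depends only on $X_1$ (given $Q_\perp$). Because $X_3Q_\perp$ is independent of $X_1Q_\perp$, I can condition on $X_1$ as well and treat $(X_1Q_{\perp})^\dagger$ as a \emph{fixed} matrix $F$ of the appropriate size; then $B = (X_3 Q_\perp)F$ with $X_3Q_\perp$ a Gaussian factor. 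Applying the separation fact to $SB = S(X_3Q_\perp)F$ gives $\E\big[\norm{SB}_F^2 \mid X_1,Q_\perp\big] = \norm{S}_F^2\,\norm{F}_F^2 = \norm{S}_F^2\,\norm{(X_1Q_\perp)^\dagger}_F^2$. It remains to take the expectation over $X_1$ (given $Q_\perp$) of $\norm{(X_1Q_\perp)^\dagger}_F^2$: this is the expected squared Frobenius norm of the pseudoinverse of an $r\times m$ Gaussian, a standard inverse-Wishart computation yielding $\frac{r}{m-r-1} = \frac{r}{(c_2-c_1)r-1} = b$. Finally I would remove the conditioning by the tower property; the constant $b$ does not depend on $Q_\perp$, so the outer expectation leaves $\E\norm{SB}_F^2 = b\norm{S}_F^2$.

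The main obstacle is making the independence and conditioning chain airtight: I must verify carefully that $Q_\perp$ is $\sigma(X_2)$-measurable and that, conditioned on $X_2$, the products $X_1Q_\perp$ and $X_3Q_\perp$ are independent Gaussians of the stated dimensions—this rests on the fact that the rows of $X$ split into independent Gaussian blocks after rotation by the fixed orthogonal $V$, and that post-multiplication by the (random but $X_2$-measurable) orthonormal $Q_\perp$ keeps each block Gaussian and isotropic. The only genuinely computational ingredient is $\E\norm{(X_1Q_\perp)^\dagger}_F^2 = r/(m-r-1)$; I would cite the inverse-Wishart moment rather than rederive it, since it is exactly the normalization underlying the constant $b$ in Theorem \ref{mainthm}.
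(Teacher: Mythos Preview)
Your proposal is correct and follows essentially the same approach as the paper: condition on $X_2$ so that $Q_\perp$ is fixed and $X_1Q_\perp$, $X_3Q_\perp$ are independent Gaussians, apply the separation identity $\E\norm{SGT}_F^2=\norm{S}_F^2\norm{T}_F^2$ from \cite[App.~A]{hmt} to the $X_3$ factor, then use the inverse-Wishart moment $\E\norm{(X_1Q_\perp)^\dagger}_F^2=r/((c_2-c_1)r-1)$ and the tower property. Your write-up is simply a more detailed version of the paper's three-line computation.
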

\begin{proof}
Since conditional on $X_2$, $X_3 Q_{\perp}$ and $X_1 Q_{\perp}$ are two independent Gaussian matrices, we have
\begin{align*}
   \E_{X_1,Q_{\perp},X_3}\norm{SB}_F^2 &= \E_{X_1,Q_{\perp}} \left[\E_{X_3} \left[\norm{S X_3 Q_{\perp} (X_1 Q_{\perp})^\dagger}_F^2 \bigg|X_1,X_2 \right] \right]\\
   &= \norm{S}_F^2 E_{X_1,Q_{\perp}}\norm{(X_1 Q_{\perp})^\dagger}_F^2 \\ 
   &= \frac{r}{(c_2-c_1)r-1}\norm{S}_F^2
\end{align*}  using the tower property and the propositions in \cite[App.~A]{hmt}.
\end{proof}
Now using these three key facts we are ready to prove Theorem \ref{mainthm}.

\begin{proof}[Proof of Theorem \ref{mainthm}]
Since $U$ is an orthonormal matrix we have
\begin{align*}
         AX(PX^T\!AXP)^\dagger X^T\!A &= AX(UU^TX^T\!AXUU^T)^\dagger X^T\!A \\
         &= AXU(U^TX^T\!AXU)^\dagger U^TX^T\!A.
\end{align*}
Now since $X_1Q_{\perp}\in \mathbb{R}^{r \times (c_2-c_1+1) r}$ is a fat rectangular Gaussian matrix, hence full rank with probability $1$, we have $X_1Q_{\perp} (X_1Q_{\perp})^\dagger = I_{r}$. Therefore
\begin{equation}
    X_1Q_{\perp}\hat{Q} = \hat{R}^{-1}
\end{equation}
and we get
\begin{equation}
    V^TXU = \begin{bmatrix}
    X_1 \\ X_2 \\ X_3
    \end{bmatrix} Q_{\perp} \hat{Q} = 
    \begin{bmatrix}
     X_1 Q_{\perp}\hat{Q} \\ 0 \\ X_3 Q_{\perp}\hat{Q}
    \end{bmatrix} =  \begin{bmatrix}
    \hat{R}^{-1} \\ 0 \\ B\hat{R}^{-1}
    \end{bmatrix}
\end{equation} where $B = X_3 Q_{\perp} (X_1 Q_{\perp})^\dagger~\in~\mathbb{R}^{(n-c_1 r)\times r}$.

Now we use the first key fact (Equation \eqref{idea1}) and get
\begin{equation}
    V^TEV = (I-\mathcal{P})\Lambda (I-V_{r}V_{r}^T)(I-V^TXU(V_{r}^TV^TXU)^\dagger V_{r}^T)
\end{equation} where $\mathcal{P} = \Lambda V^T X(PX^T\!AXP)^\dagger X^TV$ and $V_r$ is as below.
Using
\begin{equation}
    V^TXU = \begin{bmatrix}
        \hat{R}^{-1} \\ 0 \\ B \hat{R}^{-1} 
    \end{bmatrix}, \Lambda = \begin{bmatrix}
        \Lambda_1 & 0 & 0 \\ 0 & \Lambda_2 & 0 \\ 0 & 0 & \Lambda_3 
    \end{bmatrix}, V_{r} = \begin{bmatrix}
        I_{r} \\ 0
    \end{bmatrix}
\end{equation} we get
\begin{align*}
    V^TE V &= (I-\mathcal{P})\Lambda \begin{bmatrix}
     0 \\ I_{n-r}
    \end{bmatrix}[0, I_{n-r}]\left(I- \begin{bmatrix}
        \hat{R}^{-1} \\ 0 \\ B\hat{R}^{-1}
    \end{bmatrix}
        \left(\hat{R}^{-1}\right)^\dagger [I_{r},0] \right) \\
    &= (I-\mathcal{P})\begin{bmatrix}
        0 & 0 & 0 \\ 0 & \Lambda_2 & 0 \\ -\Lambda_3 B & 0 & \Lambda_3
    \end{bmatrix}.
\end{align*}
We also get
\begin{align*}
    \mathcal{P} &= \begin{bmatrix}
        \Lambda_1 \hat{R}^{-1} \\ 0 \\ B\hat{R}^{-1}
    \end{bmatrix}\left( \begin{bmatrix}
        \hat{R}^{-1} \\ 0 \\ B\hat{R}^{-1}
    \end{bmatrix}^T \begin{bmatrix}
        \Lambda_1 \hat{R}^{-1} \\ 0 \\ \Lambda_3 B\hat{R}^{-1}
    \end{bmatrix} \right)^\dagger \begin{bmatrix}
        \hat{R}^{-1} \\ 0 \\ B\hat{R}^{-1}
    \end{bmatrix}^T \\
    &= \begin{bmatrix}
        \Lambda_1 \hat{R}^{-1} \\ 0 \\ \Lambda_3 B \hat{R}^{-1}
    \end{bmatrix}\left( \hat{R}^{-T}\left(\Lambda_1 +B^T\Lambda_3 B\right) \hat{R}^{-1} \right)^\dagger \begin{bmatrix}
        \hat{R}^{-1} \\ 0 \\ B \hat{R}^{-1}
    \end{bmatrix}^T \\
    &= \begin{bmatrix}
        \Lambda_1 \\ 0 \\ \Lambda_3 B
    \end{bmatrix} \left( \Lambda_1 + B^T\Lambda_3 B \right)^\dagger [I_r,0,B^T]
\end{align*} by taking out a factor of $\hat{R}^{-1}$ and $\hat{R}^{-T}$ from the pseudo-inverse. This is possible because if we condition on $\Omega_F$ then $(\Lambda_1 + B^T \Lambda_3 B)$ is a non-singular $r\times r$ matrix. Now for shorthand let $S := \Lambda_1 + B^T\Lambda_3 B $. Then
\begin{align*}
    I-\mathcal{P} = \begin{bmatrix}
        I_r - \Lambda_1 S^\dagger & 0 & -\Lambda_1 S^\dagger B^T \\
        0 & I_{(c_1-1)r} & 0 \\ -\Lambda_3B S^\dagger & 0 & I_{n-c_1 r}-\Lambda_3B S^\dagger B^T 
    \end{bmatrix}.
\end{align*}

Therefore
\begin{align*}
    V^TE V &= (I-\mathcal{P})\begin{bmatrix}
        0 & 0 & 0 \\ 0 & \Lambda_2 & 0 \\ -\Lambda_3 B & 0 & \Lambda_3
    \end{bmatrix} \\
    &= \begin{bmatrix}
    \Lambda_1 S^\dagger B^T\Lambda_3 B & 0 & -\Lambda_1 S^\dagger B^T\Lambda_3 \\ 0 & \Lambda_2 & 0 \\ -\Lambda_3 B+\Lambda_3B S^\dagger B^T \Lambda_3 B & 0 & \Lambda_3 - \Lambda_3B S^\dagger B^T \Lambda_3
    \end{bmatrix} \\
    &= \begin{bmatrix}
    \Lambda_1 S^\dagger B^T\Lambda_3 B & 0 & -\Lambda_1 S^\dagger B^T\Lambda_3 \\ 0 & \Lambda_2 & 0 \\ -\Lambda_3 B S^\dagger \Lambda_1  & 0 & \Lambda_3 - \Lambda_3B S^\dagger B^T \Lambda_3
    \end{bmatrix}.
\end{align*}
We now bound $E$ in the nuclear norm.
For shorthand, define the following
\begin{align*}
    a_1 &= \Lambda_1 S^\dagger B^T \Lambda_3 B \\
    a_2 &= \Lambda_1 S^\dagger B^T \Lambda_3 \\
    a_3 &= \Lambda_3 - \Lambda_3 B S^\dagger B^T\Lambda_3.
\end{align*}
We then have
\begin{align*}
    \norm{E}_* \leq \norm{\Lambda_2}_* + \norm{a_1}_*+2\norm{a_2}_*+\norm{a_3}_*.
\end{align*}
Let us note
\begin{equation}
   \Lambda_1 S^\dagger = \Lambda_1 \left(\Lambda_1 + B^T\Lambda_3 B\right)^\dagger = \left(I_r + B^T\Lambda_3 B \Lambda_1^{-1} \right)^\dagger
\end{equation}
conditional on $\Omega_F$ since $\lambda_r(A)\neq 0$ and
\begin{equation}
    \norm{B^T\Lambda_3 B}_F =  \norm{B^T|\Lambda_3|^{1/2} \sign{(\Lambda_3)} |\Lambda_3|^{1/2} B}_F \leq \norm{|\Lambda_3|^{1/2}B}_F^2
\end{equation} where $|\Lambda_3|$ and $\sign(\Lambda_3)$ are defined element-wise. 

We now bound $\E[\norm{a_1}_*|\Omega_F]$, $\E[\norm{a_2}_*|\Omega_F]$ and $\E[\norm{a_3}_*|\Omega_F]$ using the second (Equation \eqref{idea2}) and the third (Lemma \ref{idea3}) key fact. We start with $a_1$. Conditional on $\Omega_F$, we have
\begin{align*}
    \norm{a_1}_* &\leq \sqrt{r} \norm{\Lambda_1 S^\dagger B^T \Lambda_3 B}_F \\
    &\leq \sqrt{r} \norm{\left( I_r + B^T\Lambda_3 B\Lambda_1^{-1} \right)^\dagger}_2 \norm{B^T \Lambda_3 B}_F \\
    &\leq \sqrt{r} \frac{\norm{B^T \Lambda_3 B}_F}{1- \norm{B^T \Lambda_3 B}_F \norm{\Lambda_1^{-1}}_2} \\
    &\leq \frac{\sqrt{r}}{\norm{\Lambda_1^{-1}}_2} \frac{\norm{|\Lambda_3|^{1/2}B}_F^2 \norm{\Lambda_1^{-1}}_2}{1- \norm{|\Lambda_3|^{1/2}B}_F^2 \norm{\Lambda_1^{-1}}_2} \\
    &\leq \frac{\sqrt{r}}{\norm{\Lambda_1^{-1}}_2} \left(2 \norm{|\Lambda_3|^{1/2}B}_F^2\norm{\Lambda_1^{-1}}_2\right)
\end{align*}
\sloppy where the last inequality was obtained using the second fact with $Y = \norm{|\Lambda_3|^{1/2}B}_F^2\norm{\Lambda_1^{-1}}_2$, the event $\Omega_F$, the interval $[0,0.5]$, $f(x) = \frac{x}{1-x}$ which is convex on $[0,0.5]$ and $g(x) = 2x$. Now taking conditional expectation and using the third fact (Lemma \ref{idea3}) we get
\begin{align*}
    \mathbb{E}\left[\norm{a_1}_* |\Omega_F\right] &\leq 2\sqrt{r} \mathbb{E}\left[\norm{|\Lambda_3|^{1/2}B}_F^2 \middle\vert  \Omega_F  \right] \\
    &= 2\sqrt{r}b \norm{|\Lambda_3|^{1/2}}_F^2 \\
    &=2\sqrt{r}b\norm{\Lambda_3}_*.
\end{align*}
For $a_2$, it is similar to $a_1$. Conditional on $\Omega_F$ we have
\begin{align*}
    \norm{a_2}_* &\leq \sqrt{r}\norm{\Lambda_1 (\Lambda_1+B^T\Lambda_3 B)^\dagger B^T\Lambda_3}_F \\
    &\leq \sqrt{r}\norm{(I_r +B^T\Lambda_3 B \Lambda_1^{-1})^\dagger}_2\norm{|\Lambda_3|^{1/2}B}_F\norm{|\Lambda_3|^{1/2}}_F \\
    &\leq \frac{\sqrt{r} \sqrt{\norm{\Lambda_3}_*}}{\sqrt{\norm{\Lambda_1^{-1}}_2}}\frac{\norm{|\Lambda_3|^{1/2}B}_F\sqrt{\norm{\Lambda_1^{-1}}_2}}{1- \norm{|\Lambda_3|^{1/2}B}_F^2\norm{\Lambda_1^{-1}}_2} \\
    &\leq \frac{\sqrt{r} \sqrt{\norm{\Lambda_3}_*}}{\sqrt{\norm{\Lambda_1^{-1}}_2}} 2 \norm{|\Lambda_3|^{1/2}B}_F\sqrt{\norm{\Lambda_1^{-1}}_2}
\end{align*} where we used the second fact for the last inequality with $Y = \norm{|\Lambda_3|^{1/2}B}_F\sqrt{\norm{\Lambda_1^{-1}}_2}$, the interval $[0,\sqrt{0.5}]$, $f(x) = \frac{x}{1-x^2}$ and $g(x) = 2x$.  Therefore we get
\begin{align*}
    \mathbb{E}[\norm{a_2}_*|\Omega_F] &\leq 2\sqrt{r}\sqrt{\norm{\Lambda_3}_*} \mathbb{E} \left[\norm{|\Lambda_3|^{1/2}B}_F \middle\vert \Omega_F \right] \\
    &\leq 2\sqrt{r}\sqrt{\norm{\Lambda_3}_*}\sqrt{\mathbb{E} \left[\norm{|\Lambda_3|^{1/2}B}_F^2 \middle\vert \Omega_F \right]} \\
    &\leq 2\sqrt{rb}\norm{\Lambda_3}_*
\end{align*} using Lemma \ref{idea3}.

Finally for $a_3$, we get
\begin{equation*}
    \norm{a_3}_* \leq \norm{\Lambda_3}_* + \sqrt{r}\norm{|\Lambda_3|^{1/2}}_2^2 \norm{|\Lambda_3|^{1/2}B}_F^2 \norm{(\Lambda_1+B^T\Lambda_3 B)^\dagger}_2
\end{equation*} in a similar manner, and conditional on $\Omega_F$ we have 
\begin{align*}
     \norm{|\Lambda_3|^{1/2}B}_F^2 \norm{(\Lambda_1+B^T\Lambda_3 B)^\dagger}_2 &\leq \norm{|\Lambda_3|^{1/2}B}_F^2 \norm{\Lambda_1^{-1}}_2\norm{\Lambda_1 (\Lambda_1+B^T\Lambda_3 B)^\dagger}_2\\
    &\leq \frac{\norm{|\Lambda_3|^{1/2}B}_F^2 \norm{\Lambda_1^{-1}}_2}{1-\norm{|\Lambda_3|^{1/2}B}_F^2 \norm{\Lambda_1^{-1}}_2} \\
    &\leq 2\norm{|\Lambda_3|^{1/2}B}_F^2 \norm{\Lambda_1^{-1}}_2
\end{align*} using the second fact with the same values as the $a_1$ case. Therefore
\begin{align*}
    \mathbb{E}[\norm{a_3}_*|\Omega_2] &\leq \norm{\Lambda_3}_* + 2\sqrt{r}\norm{\Lambda_3}_2 \norm{\Lambda_1^{-1}}_2 \mathbb{E}\left[\norm{|\Lambda_3|^{1/2}B}_F^2 \middle\vert \Omega_F \right] \\
    &\leq \norm{\Lambda_3}_* + 2\sqrt{r}b \norm{\Lambda_3}_2 \norm{\Lambda_1^{-1}}_2 \norm{\Lambda_3}_*.
\end{align*}

Finally, combining everything together we get
\begin{equation}
    \mathbb{E}\left[\norm{E}_*|\Omega_F\right] \leq \norm{\Lambda_2}_* +\norm{\Lambda_3}_* + 2b\sqrt{r} \left(1+\frac{|\lambda_{c_1 r+1}(A)|}{|\lambda_r(A)|}  + \frac{2}{\sqrt{b}}\right)\norm{\Lambda_3}_*.
\end{equation}
Therefore 
\begin{equation}
    \mathbb{E}\left[\norm{E}_*|\Omega_F\right] \leq (1+\epsilon_{r,A})\left(\norm{\Lambda_2}_*+\norm{\Lambda_3}_*\right) =(1+\epsilon_{r,A}) \norm{A-\lowrank{A}{r}}_*
\end{equation}
with 
\begin{equation}
    \epsilon_{r,A} = 2b\sqrt{r} \left(1+\frac{|\lambda_{c_1 r+1}(A)|}{|\lambda_r(A)|}  + \frac{2}{\sqrt{b}}\right)\frac{\norm{\Lambda_3}_*}{\norm{\Lambda_2}_*+\norm{\Lambda_3}_*}.
\end{equation}
\end{proof}
\begin{remark}\ \label{remark:thm}
\begin{enumerate}
    \item The relative-error nuclear norm bound is informative if $\epsilon_{r,A}$ is small. Now since $b \approx (c_2-c_1)^{-1} = O(1)$, we have
    \begin{equation} \label{epsbound}
        \epsilon_{r,A} = O\left(\frac{\sqrt{r}\norm{\Lambda_3}_*}{\norm{\Lambda_2}_*+\norm{\Lambda_3}_*}\right).
    \end{equation} Therefore the relative-error nuclear norm bound is good if 
    \begin{equation}
        \sqrt{r} \sum_{j = c_1r+1}^n |\lambda_j(A)| = \sqrt{r}\norm{\Lambda_3}_* \lesssim \norm{\Lambda_2}_* =  \sum_{j = r+1}^{c_1r} |\lambda_j(A)|.
    \end{equation}
    \item Using a similar proof technique we can obtain mixed norm bounds. The 2-norm version of Theorem \ref{mainthm} would give 
    \begin{equation}
    \mathbb{E}\left[\norm{E}_2|\Omega_F\right] \leq \norm{A-\lowrank{A}{r}}_2+ \frac{\epsilon_{r,A}}{\sqrt{r}}\norm{A-\lowrank{A}{r}}_*
    \end{equation}
    and the Frobenius norm version would give 
    \begin{equation}
    \mathbb{E}\left[\norm{E}_F|\Omega_F\right] \leq \norm{A-\lowrank{A}{r}}_F+ \frac{\epsilon_{r,A}}{\sqrt{r}}\norm{A-\lowrank{A}{r}}_*
    \end{equation}
    where $\epsilon_{r,A}$ is as in Theorem \ref{mainthm}. Therefore the constant in front of the best rank-$r$ nuclear norm error improves to $\epsilon_{r,A}/\sqrt{r} = O(1)$ using the second remark \eqref{epsbound}.
This type of mixed norm bounds along with the relative-error nuclear norm bound in Theorem \ref{mainthm} are fairly consistent with the SPSD versions in Table $1$ of \cite{GM}.
    \item We can relax the condition $\Omega_F$ to $\Omega_2:=\left\{\norm{|\Lambda_3|^{1/2}B}_2^2 \leq 0.5 |\lambda_r(A)|\right\}$ at the cost of a slightly worse bound in Equation \eqref{thmbound}. It is easy to show that the bound in Equation \eqref{thmbound} then changes to
    \begin{equation}
    \mathbb{E}\left[\norm{E}_*|\Omega_2\right] \leq (1+\sqrt{r}\epsilon_{r,A}) \norm{A-\lowrank{A}{r}}_*.
\end{equation}   
\end{enumerate}
\end{remark}

\paragraph{Probability of $\Omega_F$} The probability of the event $\Omega_F$ happening can be computed by following the proof of Theorem 10.8 in \cite{hmt} using $k = r$ and $p = (c_2-c_1)r$ and Lemma \ref{idea3}. We get
\begin{align*}
    \Prob\left(\norm{|\Lambda_3|^{1/2}B}_F \leq \sqrt{\norm{\Lambda_3}_*}\sqrt{\frac{3r}{(c_2-c_1)r+1}}t + \sqrt{\norm{\Lambda_3}_2} \frac{e\sqrt{(c_2-c_1+1)r}}{(c_2-c_1)r+1} tu\right) \\ \geq 1-2t^{-(c_2-c_1)r}-e^{-u^2/2}
\end{align*} for $u,t >0$. Now using $(x+y)^2 \leq 2(x^2+y^2)$, we get
\begin{align*}
    \left(\sqrt{\norm{\Lambda_3}_*}\sqrt{\frac{3r}{(c_2-c_1)r+1}}t + \sqrt{\norm{\Lambda_3}_2} \frac{e\sqrt{(c_2-c_1+1)r}}{(c_2-c_1)r+1} tu \right)^2 \\ \leq 2t^2\left(\norm{\Lambda_3}_* \frac{3r}{(c_2-c_1)r+1} + \norm{\Lambda_3}_2 \frac{e^2(c_2-c_1+1)r}{\left((c_2-c_1)r+1\right)^2} u^2\right).
\end{align*} Therefore
\begin{equation}
    \Prob\left(\Omega_F\right) = \Prob\left(\norm{|\Lambda_3|^{1/2}B}_F^2 \leq 0.5|\lambda_r(A)| \right) \geq 1-2t^{-(c_2-c_1)r}-e^{-u^2/2}
\end{equation} if
\begin{equation}
    0.5|\lambda_r(A)| \geq 2t^2\left(\norm{\Lambda_3}_* \frac{3r}{(c_2-c_1)r+1} + \norm{\Lambda_3}_2 \frac{e^2(c_2-c_1+1)r}{\left((c_2-c_1)r+1\right)^2} u^2\right),
\end{equation}i.e., $\Omega_F$ holds with high probability when the tail singular values of $A$ decay rapidly. A similar result can also be derived for $\Omega_2$ by following the same results in \cite{hmt}.

\paragraph{Mixed norm bounds} We can obtain mixed norm bounds for Theorem \ref{mainthm}. The 2-norm version of Theorem \ref{mainthm} would give 
    \begin{equation} \label{2normmix}
    \mathbb{E}\left[\norm{E}_2|\Omega_F\right] \leq \norm{A-\lowrank{A}{r}}_2+ \frac{\epsilon_{r,A}}{\sqrt{r}}\norm{A-\lowrank{A}{r}}_*
    \end{equation}
    and the Frobenius norm version would give 
    \begin{equation} \label{frobnormmix}
    \mathbb{E}\left[\norm{E}_F|\Omega_F\right] \leq \norm{A-\lowrank{A}{r}}_F+ \frac{\epsilon_{r,A}}{\sqrt{r}}\norm{A-\lowrank{A}{r}}_*
    \end{equation}
    where $\epsilon_{r,A}$ is as in Theorem \ref{mainthm}. This improves the constant in front of the best rank-$r$ nuclear norm error to $\epsilon_{r,A}/\sqrt{r} = O(1)$ using the first remark \eqref{epsbound} in Remark \ref{remark:thm}. The proof for the two mixed norm bounds above can be obtained by following the proof of Theorem \ref{mainthm}. More specifically, the proof for the mixed norm bounds stay the same until we bound $a_1$, $a_2$ and $a_3$. To get the mixed norm bound, we use the appropriate norms to bound $a_1$, $a_2$ and $a_3$. For example, to bound $\norm{a_1}_F$, we start similarly as in the nuclear norm case by conditioning on $\Omega_F$ to obtain
    \begin{align*}
        \norm{a_1}_F
    &\leq \norm{\left( I_r + B^T\Lambda_3 B\Lambda_1^{-1} \right)^\dagger}_2 \norm{B^T \Lambda_3 B}_F \\
    &\leq \frac{\norm{B^T \Lambda_3 B}_F}{1- \norm{B^T \Lambda_3 B}_F \norm{\Lambda_1^{-1}}_2} \\
    &\leq \frac{1}{\norm{\Lambda_1^{-1}}_2} \left(2 \norm{|\Lambda_3|^{1/2}B}_F^2\norm{\Lambda_1^{-1}}_2\right).
    \end{align*} We then get 
    \begin{align*}
    \mathbb{E}\left[\norm{a_1}_* |\Omega_F\right] &\leq 2 \mathbb{E}\left[\norm{|\Lambda_3|^{1/2}B}_F^2 \middle\vert  \Omega_F  \right] =2b\norm{\Lambda_3}_*.
    \end{align*}
    The bound for $\norm{a_2}_F,\norm{a_3}_F, \norm{a_1}_2,\norm{a_2}_2$ and $\norm{a_3}_2$ follows similarly. The mixed norm bounds \eqref{2normmix} and \eqref{frobnormmix} along with the relative-error nuclear norm bound in Theorem \ref{mainthm} are fairly consistent with the SPSD versions in Table $1$ of \cite{GM}.

Theorem \ref{mainthm} and its proof cannot simply be translated into an algorithm because the proof relies on the eigendecomposition of $A$, which is too expensive to compute. However, the proof naturally suggests Algorithm \ref{alg:nys}. From the proof of Theorem \ref{mainthm}, under the condition that the matrix has a low-rank structure discussed in this section, for example in the paragraph after the statement of Theorem \ref{mainthm} or in the remark above, we have that a projection is desired in the core matrix. This projection gets rid of the large `unwanted' eigenvalues of $A$, i.e. the eigenvalues in $\Lambda_2$. In the Nystr\"om method, a natural analogue is to truncate the smallest few singular values in the core matrix $W = X^T\!AX$ to achieve the target rank $r$, which is what has been done in Algorithm \ref{alg:nys}. The theorem also suggests that the sketch size should be proportional to the target rank $r$, which is what we suggest in Algorithm \ref{alg:nys}. Despite Algorithm \ref{alg:nys} lacking complete theory (even for the SPSD case), we suggest it because the algorithm does seem to work well in practice as we illustrate below.

\section{Numerical illustration} \label{sec:numexp}
We first illustrate Theorem \ref{mainthm} and Algorithm \ref{alg:nys} through experiments. In Figure \ref{thmfig}, we show a priori and a posteriori error in Theorem \ref{mainthm}, and Algorithm \ref{alg:nys} using $1000\times 1000$ symmetric indefinite matrices. In the left plot, the matrix $A$ has eigenvalues that decay geometrically from $1$ to $10^{-12}$ each assigned a random sign with equal probability. In the right plot, $A$ has eigenvalues equal to $\pm 1$ for the first $100$ eigenvalues and $\pm 10^{-10}$ for the other $900$ eigenvalues each assigned a random sign with equal probability; this example illustrates the performance when there is a gap in the singular values. The eigenvectors for both plots are in a $2\times 2$ block diagonal form, $\mathrm{diag}(I_{100},U)$ where $I_{100}$ is the $100\times 100$ identity matrix and $U\in\R^{900\times 900}$ is a Haar distributed orthogonal matrix. Both the algorithm and the theorem were constructed using the Gaussian sketch with the sketch size $1.5r$ for the algorithm and $c_1r = 1.5r$ and $c_2r = 2r$ for the theorem. We see that $\Omega_F$ holds whenever there is a rapid decay of eigenvalues, i.e., when $|\lambda_r| \gg |\lambda_{c_1 r+1}|$. But more importantly, we see that the bound holds when the event $\Omega_F$ occurs (circles) and frequently holds even if the event $\Omega_F$ did not occur (crosses). The theorem does extremely well when $\Omega_F$ has occurred. We see that the algorithm gives a good robust approximation that is a modest factor worse than the best approximation given by the SVD. Although the theorem does better than the algorithm when $\Omega_F$ holds, the theorem can give unstable approximation when $\Omega_F$ does not hold. This illustrates that the algorithm, which arose from the theorem, works well in practice. 

\begin{figure}[htbp]
\hspace*{-1cm}
\includegraphics[scale = 0.51]{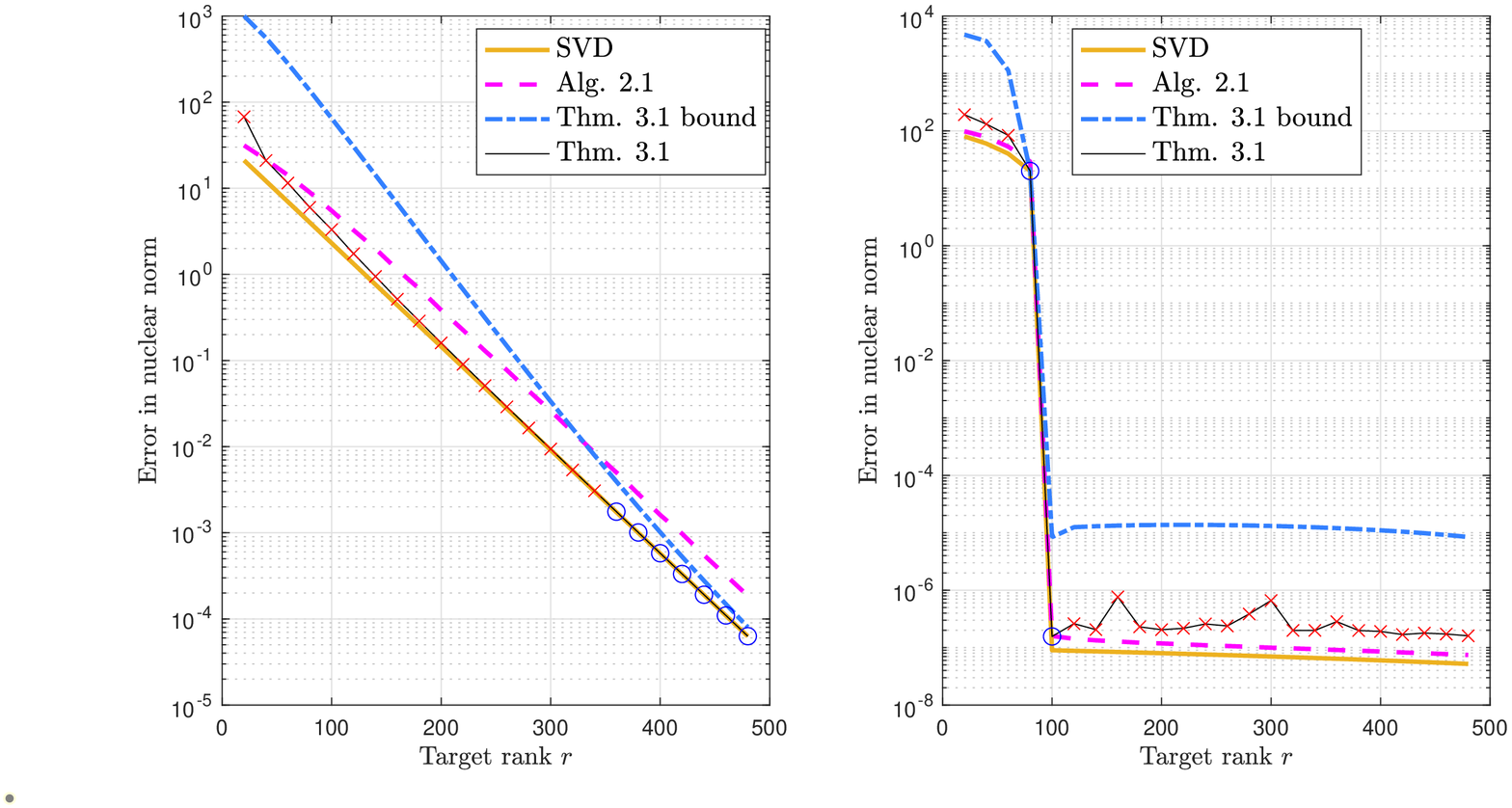}
\centering
\caption{Two plots showing the empirical results for Theorem \ref{mainthm} and Algorithm \ref{alg:nys}. Algorithm \ref{alg:nys} is robust with the approximation being a modest factor worse than the best approximation. Theorem \ref{mainthm} bound holds when $\Omega_F$ has occurred (circles on Theorem \ref{mainthm}) and also frequently holds even it $\Omega_F$ has not occurred (crosses on Theorem \ref{mainthm}). Theorem \ref{mainthm} does extremely well when $\Omega_F$ has occurred.}
\label{thmfig}
\end{figure}

In experiments not shown here, we 
compared Algorithm~\ref{alg:nys} with randomized SVD \cite{hmt} and the generalized Nystr\"om method \cite{clarksonwoodruff14,GN,TroppGN,fastfft}, which are applicable to nonsymmetric (and rectangular) matrices and do not preserve symmetry. 
We observe that Algorithm~\ref{alg:nys} tends to obtain a slightly better approximant for a fixed rank $r$.

\subsection{Synthetic examples}
We now compare some of the existing algorithms against Algorithm \ref{alg:nys} using different kernel functions and synthetic dataset. We illustrate the following algorithms
\begin{enumerate}
    \item Algorithm \ref{alg:nys} with the SRFT sketch and the sketch size $s = 2r$,
    \item Algorithm \ref{alg:nys} with uniform column sampling and the sketch size $s = 2r$,
    \item Algorithm \ref{alg:nys} with leverage score column sampling and the sketch size $s = 2r$,
    \item Submatrix-Shifted (SMS) Nystr\"om \cite{smsnystrom} with uniform column sampling and $s_1 = r$, $s_2 = 2r$ and $\alpha = 1.5$,
    \item Submatrix-Shifted (SMS) Nystr\"om \cite{smsnystrom} with the Gaussian sketch and $s_1 = r$, $s_2 = 2r$ and $\alpha = 1.5$,
    \item Stabilized Nystr\"om \cite{indefnys} with the SRFT sketch, $s=r$ and $\epsilon = 10^{-14}$
\end{enumerate} where $r$ is the target rank and the parameters for SMS Nystr\"om and Stabilized Nystr\"om are as recommended in their original papers.\footnote{For stabilized Nystr\"om method, $s=r$ was chosen to ensure that all approximations in the experiment have rank at most $r$ and $\epsilon = 10^{-14}$ as suggested in the original paper was chosen to try diminish the error that might come from taking the pseudo-inverse of the core matrix $W$.} For SMS Nystr\"om method, the Gaussian sketch was not used in the original paper \cite{smsnystrom}. We use the following kernel functions
\begin{enumerate}
    \item Epanechnikov kernel: $k_1(x,y) = \max\{1-\norm{x-y}^2,0\}$
    \item Multiquadric kernel: $k_2(x,y) = \sqrt{1+\norm{x-y}^2}$
    \item Thin plate spline: $k_3(x,y) = \norm{x-y}^2 \ln\left(\norm{x-y}^2\right)$
\end{enumerate}
to generate the kernel matrices. The kernel matrices $K^{(1)},K^{(2)}$ and $K^{(3)}$ corresponding to the kernel functions $k_1,k_2$ and $k_3$ were generated by sampling $1000$ random numbers  $\{x_i\}_{i=1}^{1000}$ from the standard normal distribution, i.e., $K^{(\ell)}_{ij} = k_\ell(x_i,x_j)$. All the kernel matrices are symmetric indefinite.

In Figure \ref{indefkerfig}, we illustrate the results. The eigenvalue histogram is shown in the left plots. The right plots show the approximation. We see that SMS Nystr\"om performs poorly in all $3$ examples except the Gaussian case for the multiquadric kernel. This is possibly because the extreme eigenvalues are large in magnitude so the large shift is ruining the approximation quality. The stabilized Nystr\"om works well for the multiquadric kernel and the thin plate spline, but the approximation is very unstable for the Epanechnikov kernel. This is possibly because the number of positive and the negative eigenvalues are about the same with similar magnitudes for the Epanechnikov kernel, which can increase the chance of instability in the core matrix.\footnote{To our knowledge, the numerical behavior of 
stabilized Nystr\"om method is an open problem; the stability analysis in \cite{GN} applies only to an algorithm where $A$ is sketched from both sides using independent sketches of different dimensions.} This also tells us that the truncation in the core matrix should not depend on the magnitude of the singular values of $W$, but the truncation should always happen proportional to the target rank. Algorithm \ref{alg:nys} using uniform column sampling and leverage score column sampling are both unstable for all $3$ examples, which shows the unreliability of using column sampling matrices. On the other hand, Algorithm \ref{alg:nys} using the SRFT sketch works well in all cases.

\begin{figure}[!ht]
\hspace*{-0.5cm}
\subfloat[]{\label{a}\includegraphics[scale = 0.25]{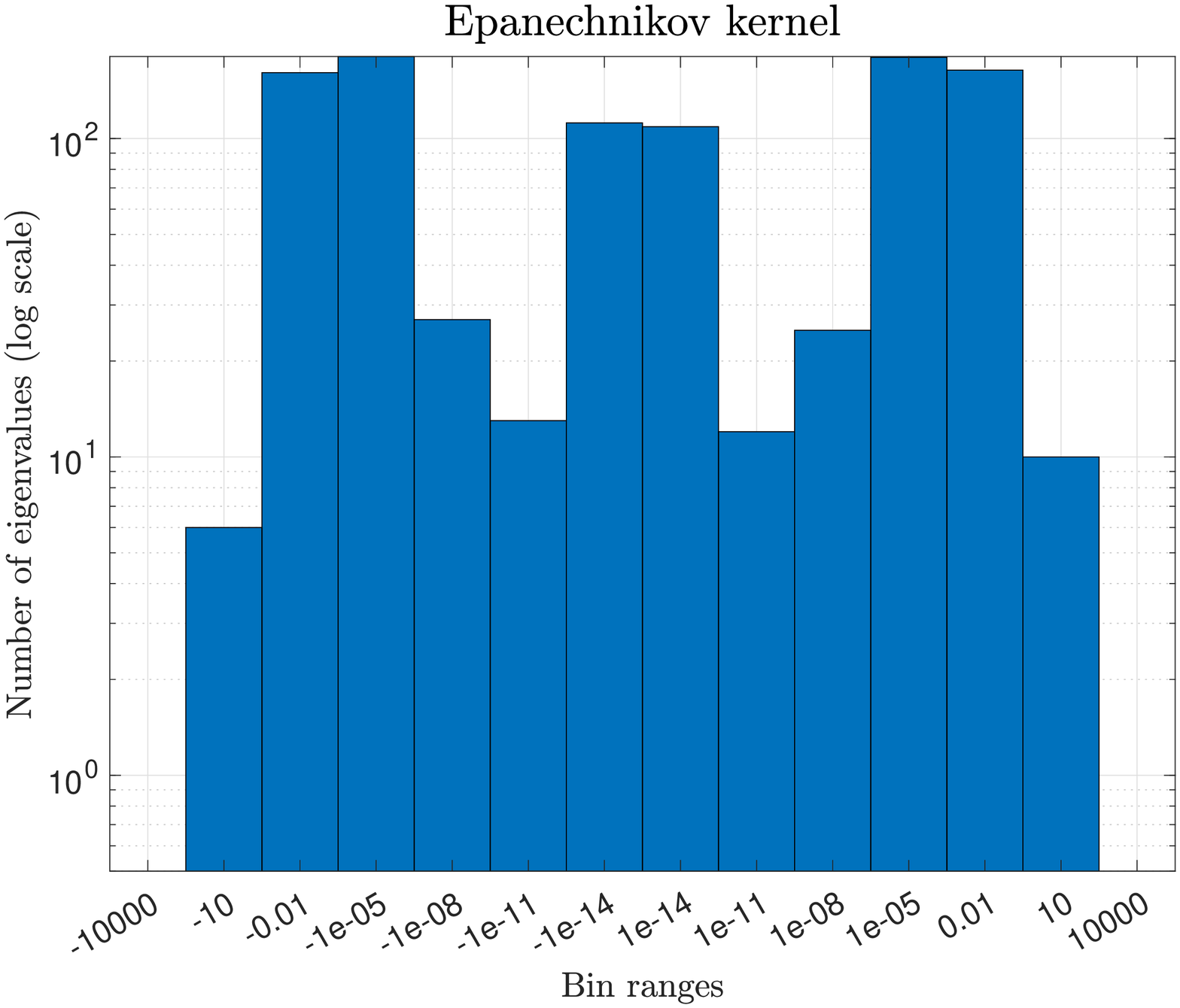}}
\hspace*{-0.7cm}
\subfloat[]{\label{b}\includegraphics[scale = 0.25]{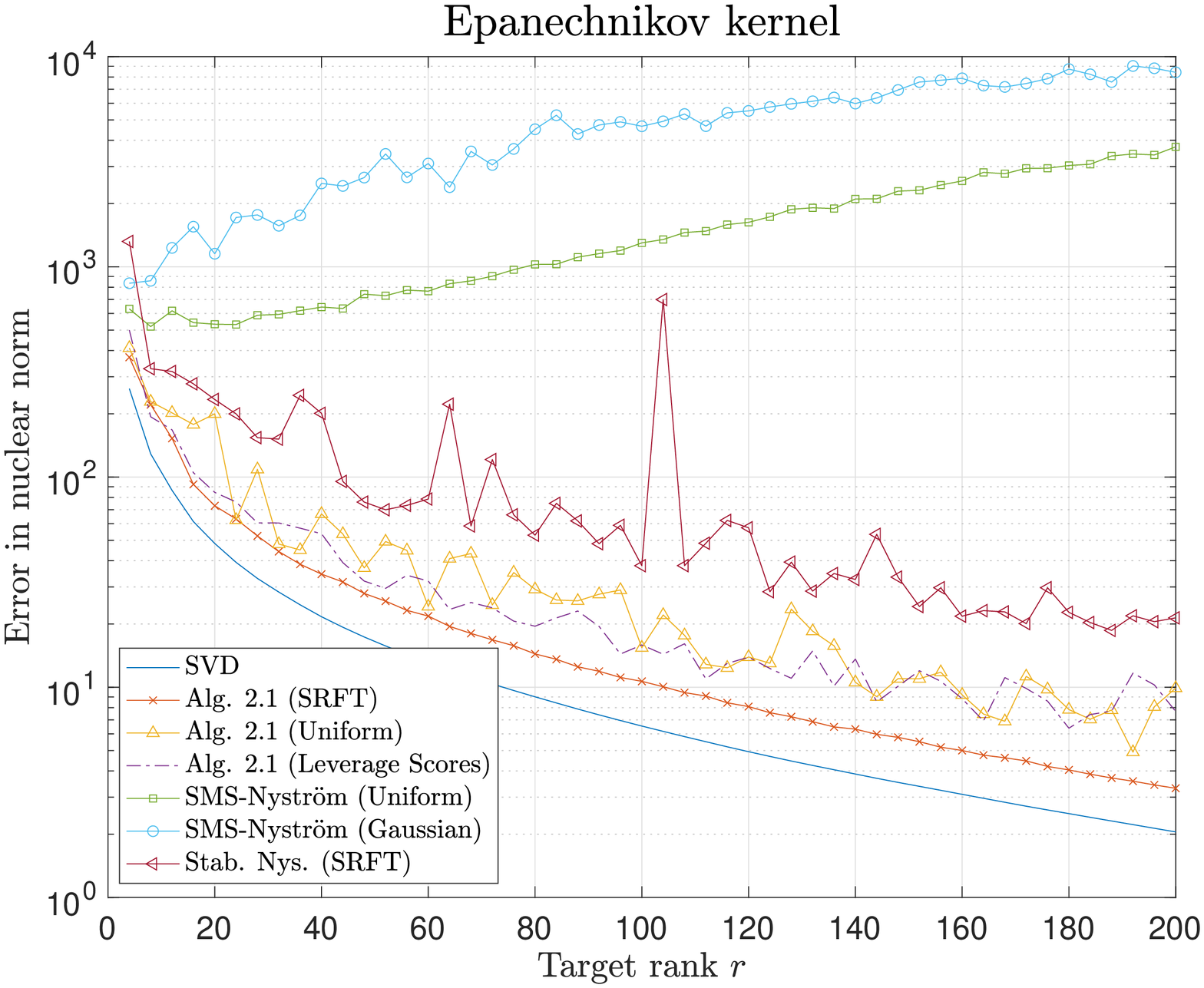}} \\ \vspace{0.5cm}
\hspace*{-0.5cm}
\subfloat[]{\label{c}\includegraphics[scale = 0.25]{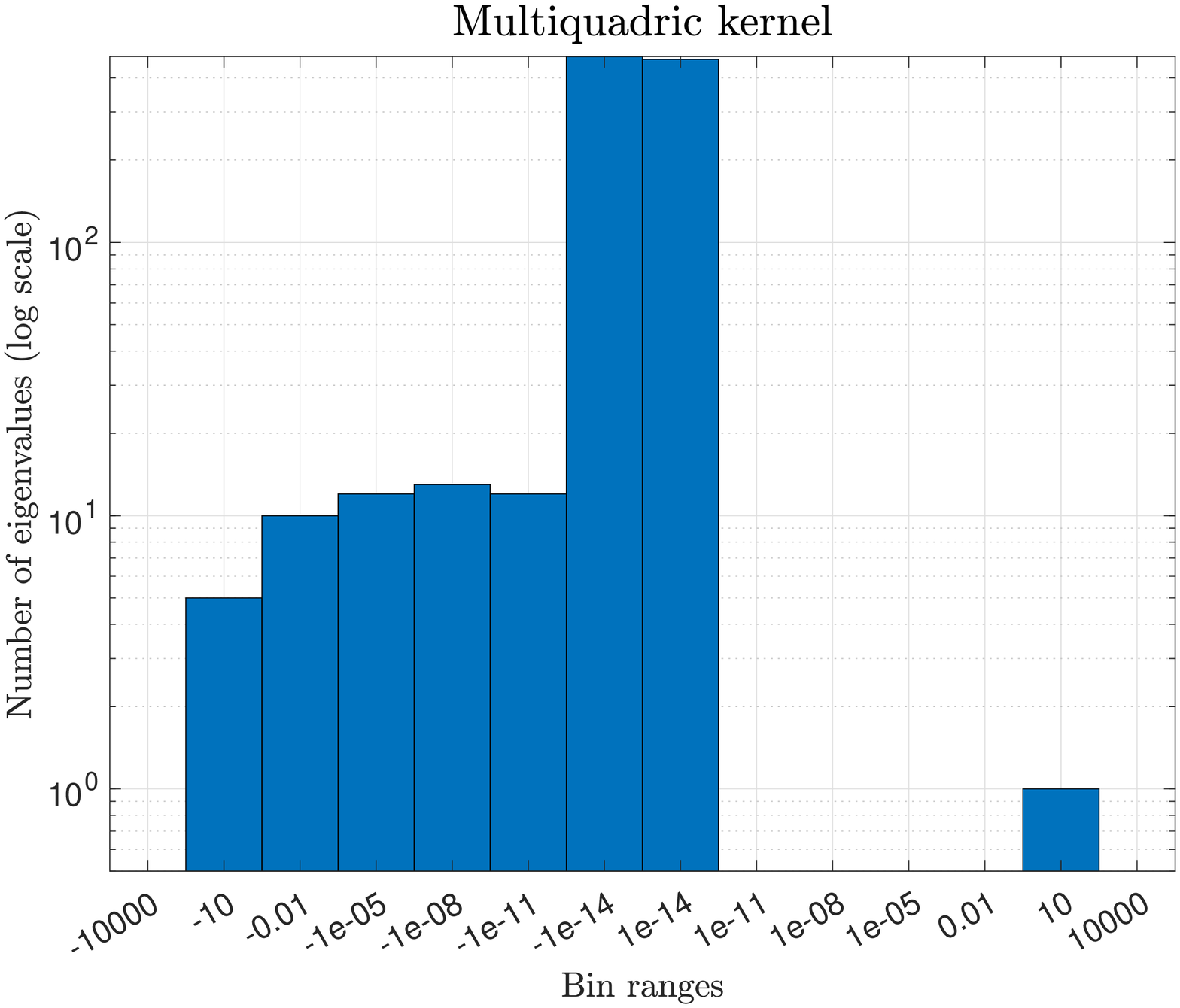}}
\hspace*{-0.7cm}
\subfloat[]{\label{d}\includegraphics[scale = 0.25]{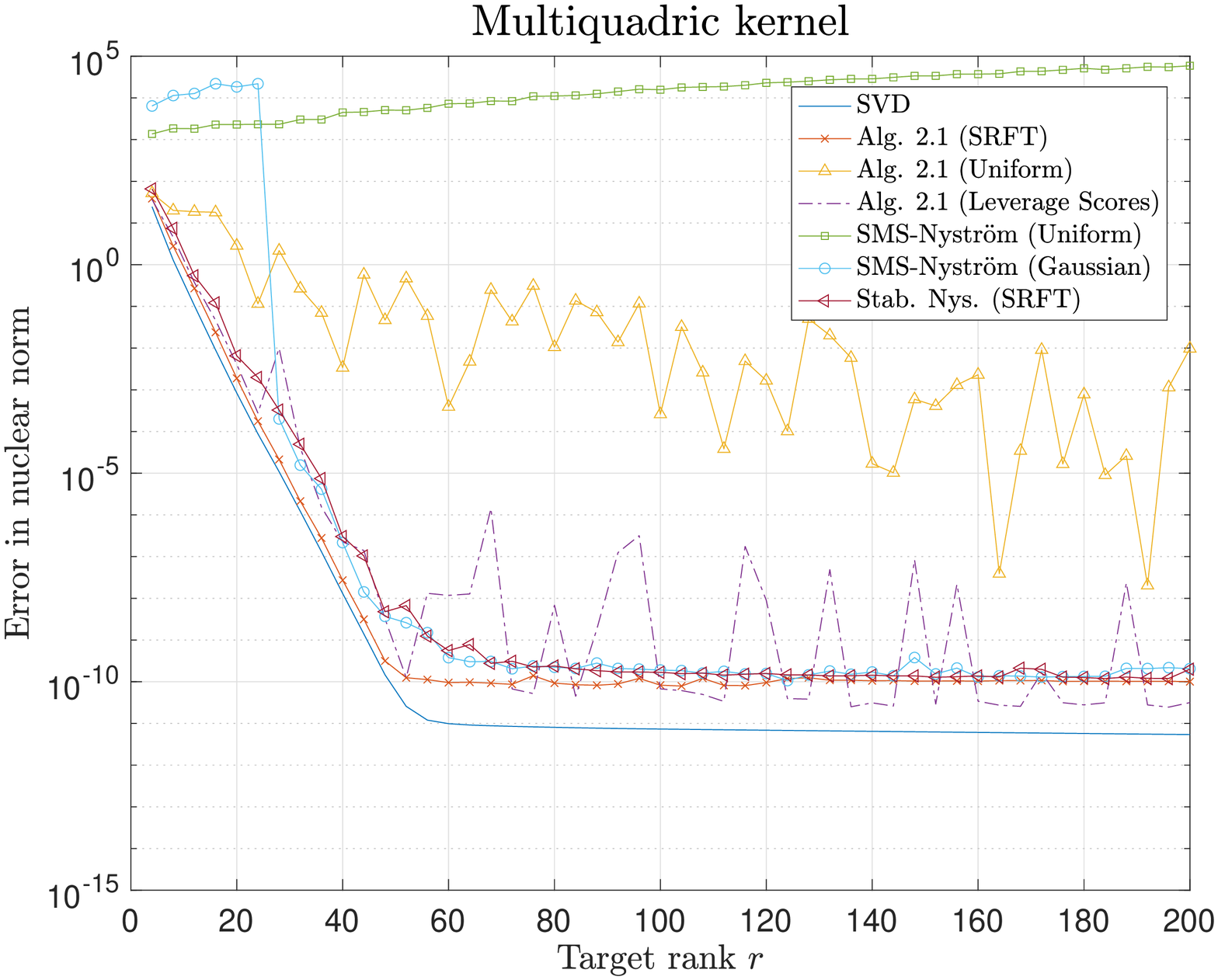}} \\ 
\hspace*{-0.5cm}
\subfloat[]{\label{e}\includegraphics[scale = 0.25]{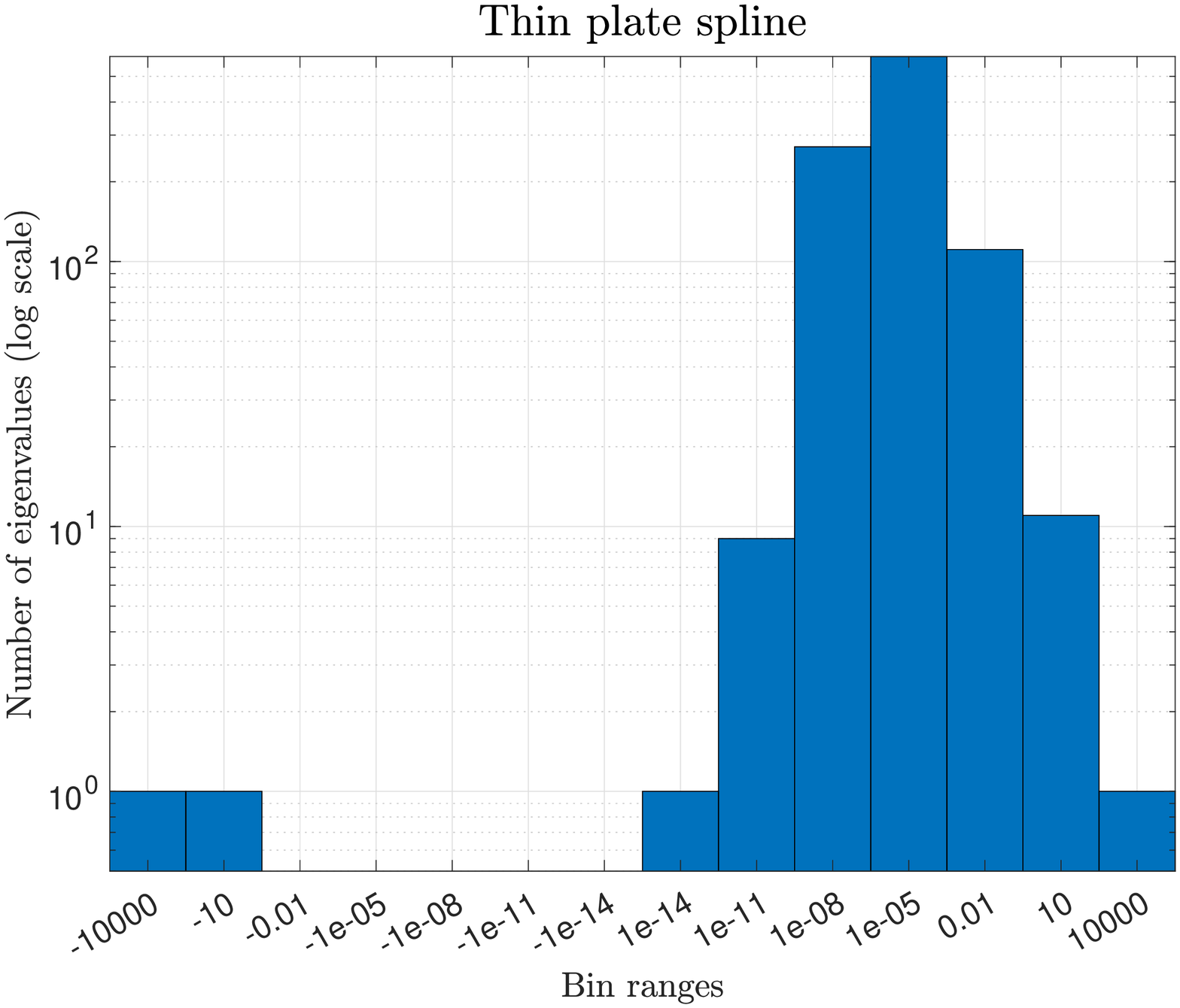}}
\hspace*{-0.7cm}
\subfloat[]{\label{f}\includegraphics[scale = 0.25]{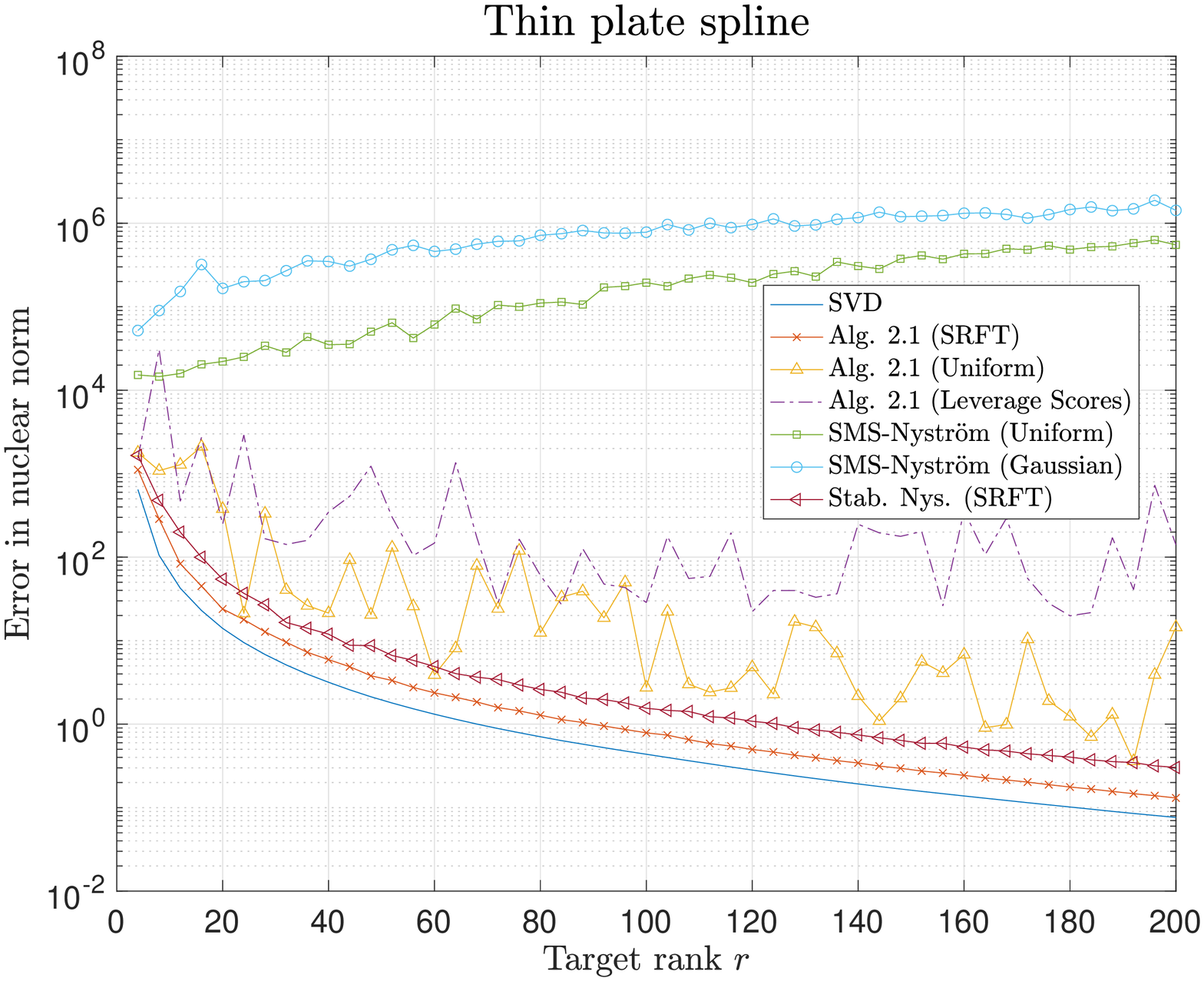}} 
\centering 
\caption{Comparison of different methods for symmetric indefinite matrices: SMS-Nystr\"om \cite{smsnystrom}, stabilized Nystr\"om \cite{indefnys} and Algorithm \ref{alg:nys}. The first two methods and Algorithm \ref{alg:nys} using uniform column sampling and leverage score column sampling can fail on some kernels while Algorithm \ref{alg:nys} using the SRFT sketch (random embedding) works well for all the kernels in the experiment.}
\label{indefkerfig}
\end{figure}

\subsection{Dataset examples} \label{subsec:dataset}
We now compare the three different methods using two different high-dimensional datasets, the Covertype and the Anuran Calls (MFCC) from the UC Irvine Machine Learning Repository \cite{UCIdata}. We illustrate the following algorithms
\begin{enumerate}
    \item Algorithm \ref{alg:nys} with the SRFT sketch and the sketch size $s = 2r$,
    \item Algorithm \ref{alg:nys} with k-means++ samples and the sketch size $s = 2r$,
    \item Algorithm \ref{alg:nys} with uniform column sampling and the sketch size $s = 2r$,
    \item Stabilized Nystr\"om with k-means++ samples, the sketch size $s = r$ and $\epsilon = 10^{-14}$
\end{enumerate} where $r$ is the target rank. We use the following kernel functions
\begin{enumerate}
    \item Thin plate spline kernel: $\norm{x-y}^2 \log\left(\norm{x-y}^2\right)$
    \item Sigmoid kernel: $\text{tanh}\left(1+\norm{x-y}^2\right)$
    \item Multiquadric kernel: $\sqrt{1+\norm{x-y}^2}$
\end{enumerate} with the datasets
\begin{enumerate}
    \item Covertype $(n = 581012)$ with dimension $d = 54$,
    \item Anuran Calls (MFCC) $(n = 7195)$ with dimension $d=22$.
\end{enumerate} For each dataset, we sample $n = 4000$ data uniformly at random and then center the mean and normalize all features to have variance $1$.

The results are illustrated in Figure \ref{fig:UCI}. We observe that the cause of instability in the Nystr\"om approximation for symmetric indefinite matrices is not necessarily coming from the core matrix $W$ having very small singular values as Stabilized Nystr\"om can give unstable approximations as seen in Figure \ref{fig:UCI}. Also, although Algorithm \ref{alg:nys} using k-means++ samples is more accurate than uniform column sampling, they both do not give robust low-rank approximations. This shows that it is difficult to find a column sampling scheme that guarantees stable Nystr\"om approximation for symmetric indefinite matrices. On the other hand, Algorithm \ref{alg:nys} using the SRFT sketch gives robust approximation throughout the experiment and sometimes outperforms the other methods in this experiment such as in Figure \ref{aa} and \ref{ee}.

\begin{figure}[!ht]
\hspace*{-0.5cm}
\subfloat[]{\label{aa}\includegraphics[scale = 0.35]{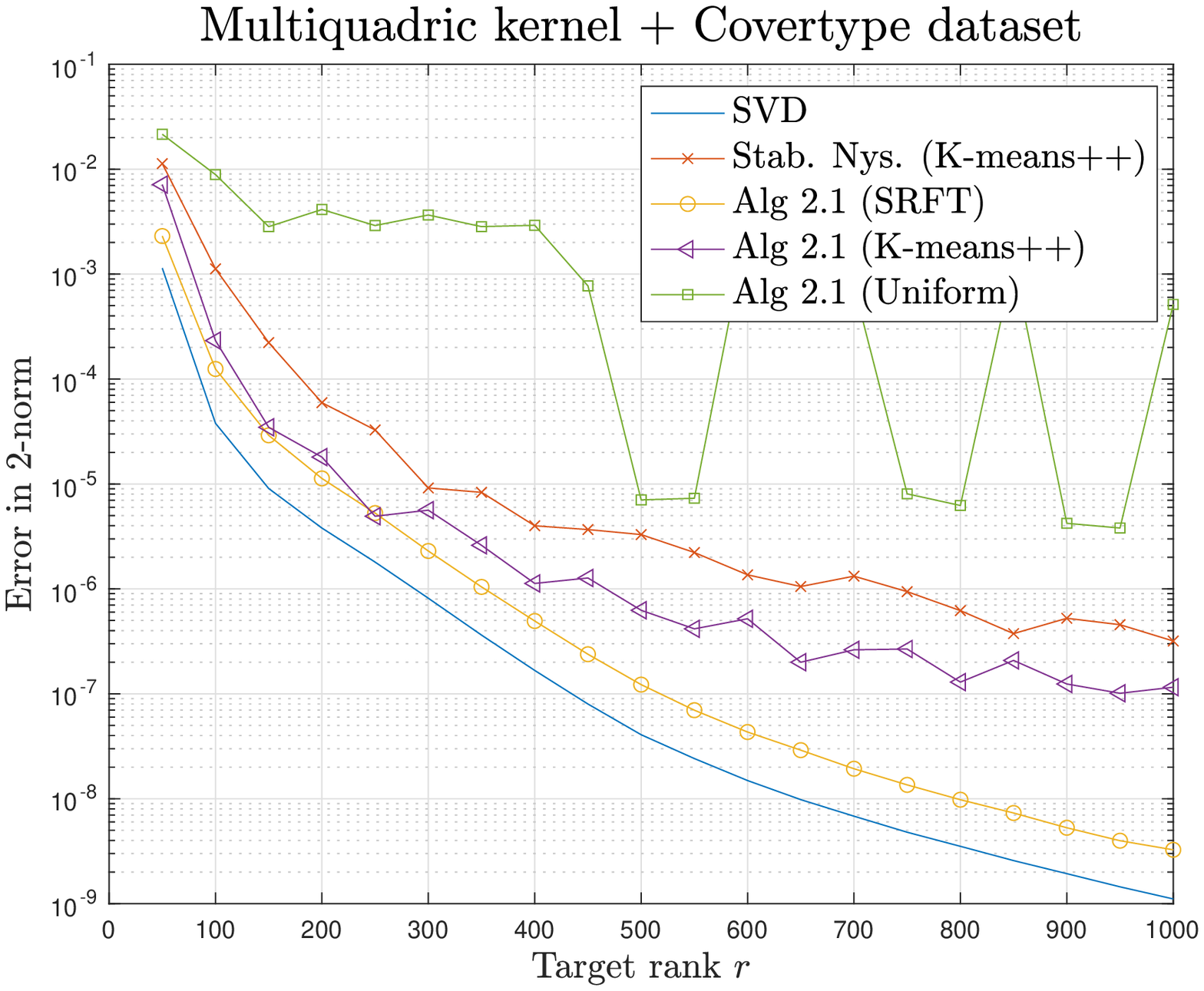}}
\hspace*{-0.5cm}
\subfloat[]{\label{bb}\includegraphics[scale = 0.35]{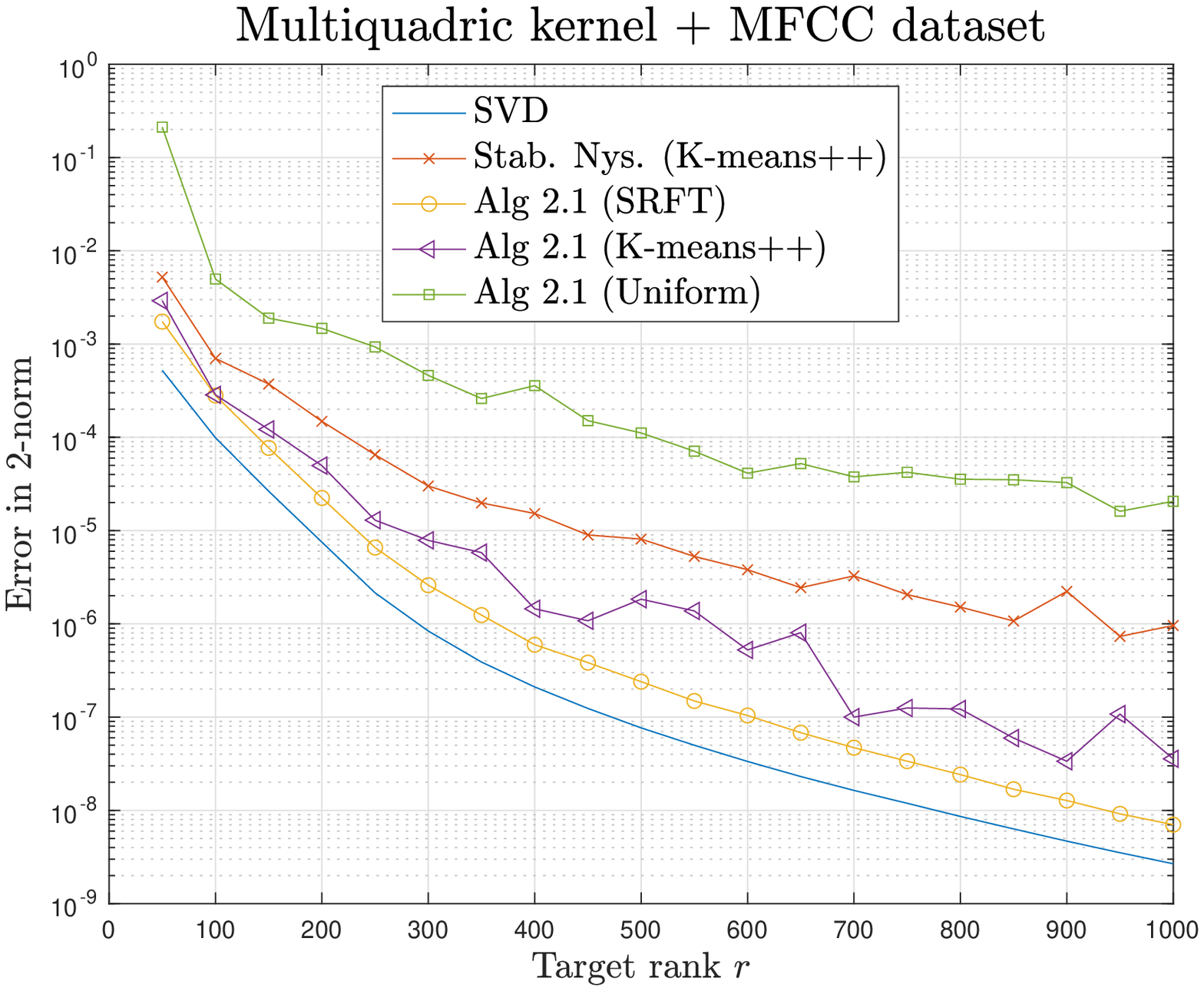}} \\
\hspace*{-0.5cm}
\subfloat[]{\label{cc}\includegraphics[scale = 0.35]{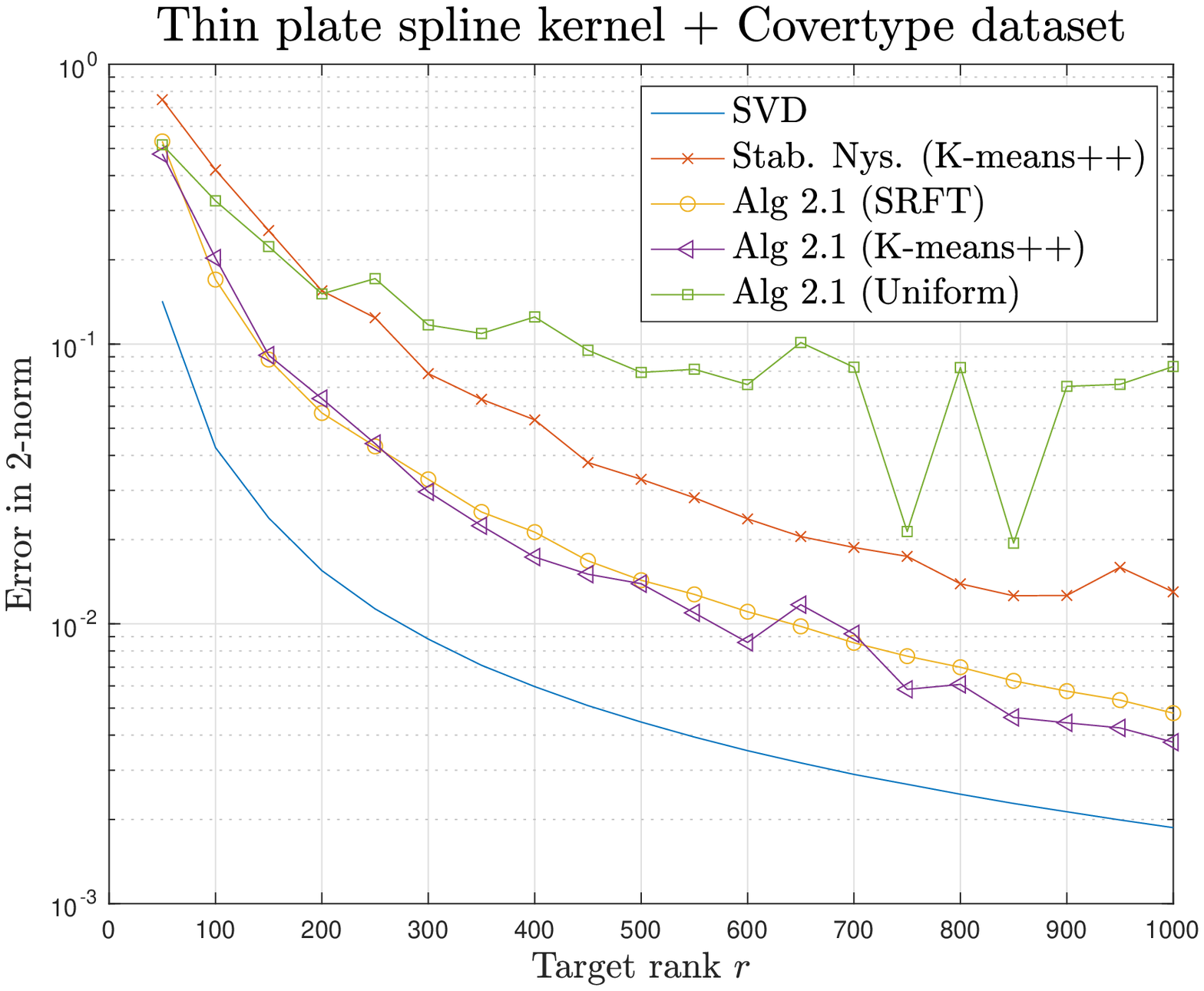}}
\hspace*{-0.5cm}
\subfloat[]{\label{dd}\includegraphics[scale = 0.35]{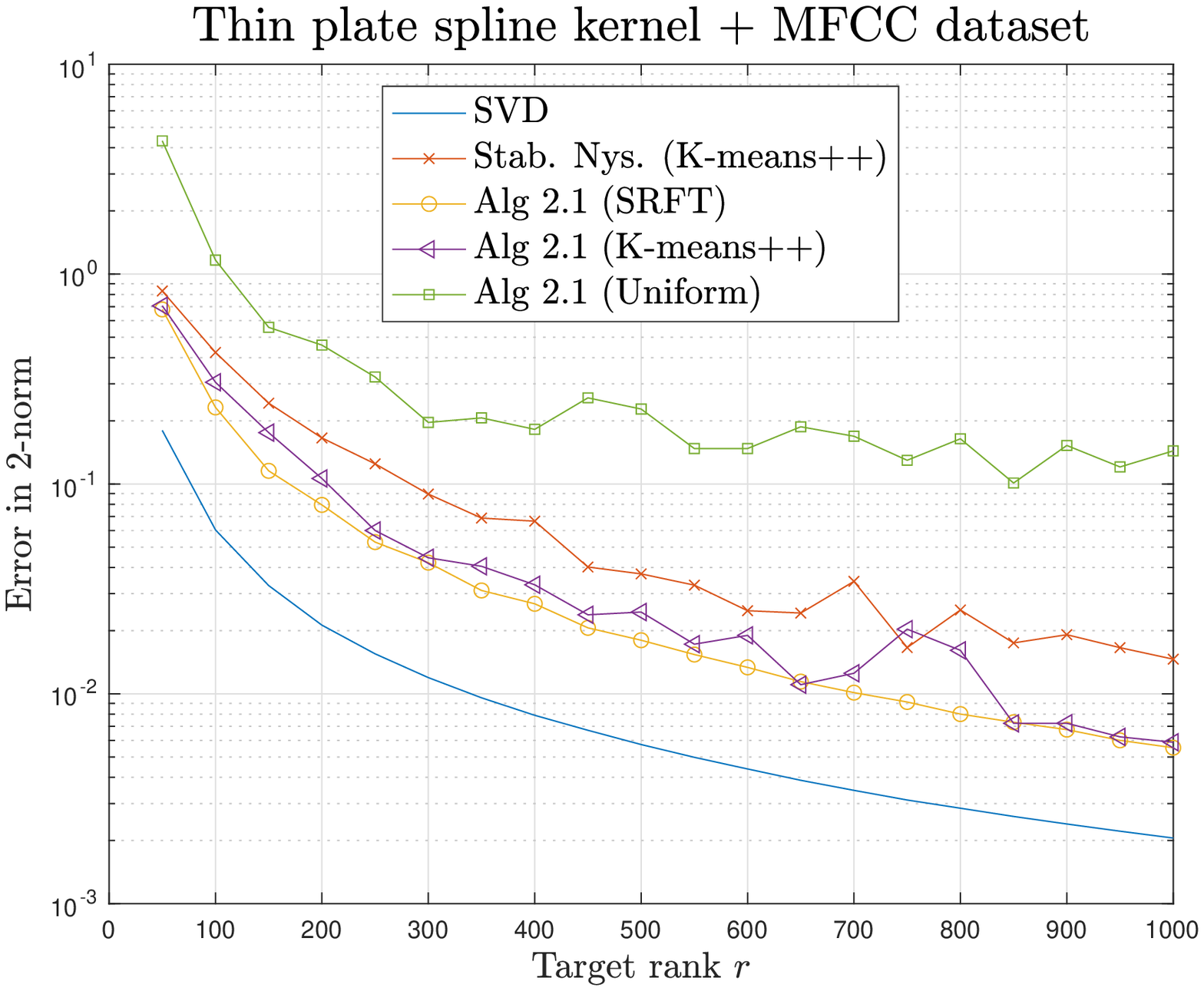}} \\
\hspace*{-0.5cm}
\subfloat[]{\label{ee}\includegraphics[scale = 0.35]{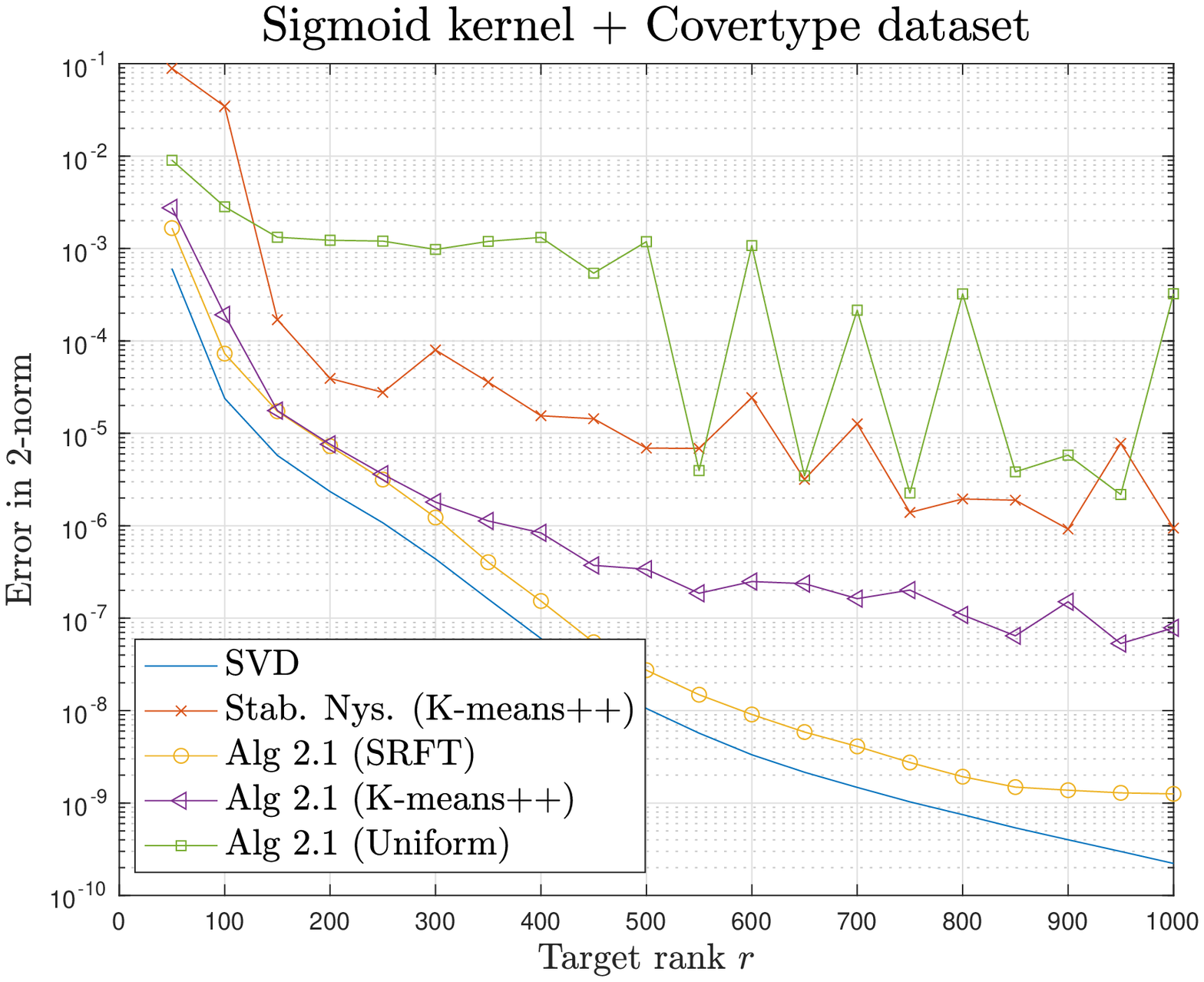}}
\hspace*{-0.5cm}
\subfloat[]{\label{ff}\includegraphics[scale = 0.35]{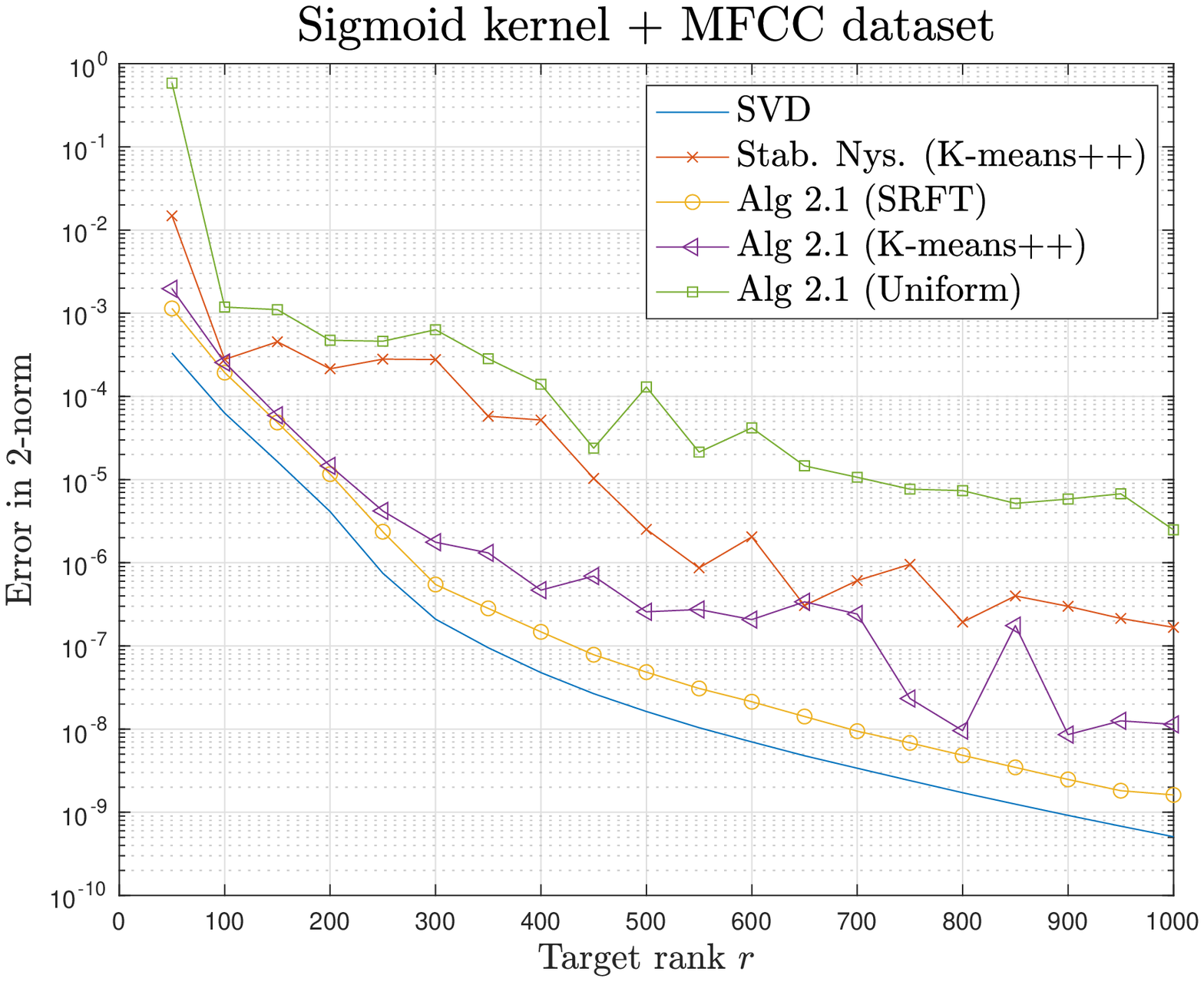}} 
\centering 
\caption{Comparison of stabilized Nystr\"om \cite{indefnys} and Algorithm \ref{alg:nys} for symmetric indefinite matrices using three different indefinite kernels and two different datasets. Stabilized Nystr\"om method and Algorithm \ref{alg:nys} using k-means++ samples and uniform column sampling can give unstable low-rank approximation while Algorithm \ref{alg:nys} using the SRFT sketch (random embedding) gives robust approximation throughout the experiment.}
\label{fig:UCI}
\end{figure}

\section{Discussion} \label{discussion}
Much of the literature on approximating symmetric matrices using any of the variants of the Nystr\"om method is based on column sampling. In this work, we used random embeddings for our algorithm (Algorithm \ref{alg:nys}) and a special class of random embeddings for the analysis, namely Gaussian embeddings. Random embeddings were used as they are more robust than column sampling, and Gaussian embeddings were used for analysis because we can leverage their rich theoretical properties. The general behaviour when we use the Nystr\"om method with column sampling matrices on symmetric indefinite matrices is unknown. In Figure \ref{indefkerfig}, we see that the two frequently used column sampling schemes, uniform sampling and leverage score sampling can be unstable. It appears to be difficult to find a column sampling scheme that guarantees robust Nystr\"om approximation for symmetric indefinite matrices and, to our knowledge, is an open problem. We hope that our results would shed light on the development of a robust indefinite Nystr\"om method based on column subsampling.

\subsection*{Acknowledgements} We thank the anonymous referees and the editor for their many insightful comments and suggestions, which helped us to improve the quality of the paper.


\bibliographystyle{siamplain}
\bibliography{references}

\end{document}